\def\vint_#1{\mathchoice%
          {\mathop{\kern 0.2em\vrule width 0.6em height 0.69678ex depth -0.58065ex
                  \kern -0.8em \intop}\nolimits_{\kern -0.4em#1}}%
          {\mathop{\kern 0.1em\vrule width 0.5em height 0.69678ex depth -0.60387ex
                  \kern -0.6em \intop}\nolimits_{#1}}%
          {\mathop{\kern 0.1em\vrule width 0.5em height 0.69678ex
              depth -0.60387ex
                  \kern -0.6em \intop}\nolimits_{#1}}%
          {\mathop{\kern 0.1em\vrule width 0.5em height 0.69678ex depth -0.60387ex
                  \kern -0.6em \intop}\nolimits_{#1}}}
\def\vintslides_#1{\mathchoice%
          {\mathop{\kern 0.1em\vrule width 0.5em height 0.697ex depth -0.581ex
                  \kern -0.6em \intop}\nolimits_{\kern -0.4em#1}}%
          {\mathop{\kern 0.1em\vrule width 0.3em height 0.697ex depth -0.604ex
                  \kern -0.4em \intop}\nolimits_{#1}}%
          {\mathop{\kern 0.1em\vrule width 0.3em height 0.697ex de pth -0.604ex
                  \kern -0.4em \intop}\nolimits_{#1}}%
          {\mathop{\kern 0.1em\vrule width 0.3em height 0.697ex depth -0.604ex
                  \kern -0.4em \intop}\nolimits_{#1}}}
\numberwithin{equation}{section}
\newtheorem{theorem}{Theorem}[section]
\newtheorem{lemma}[theorem]{Lemma}
\newtheorem{corollary}[theorem]{Corollary}
\theoremstyle{definition}
\newtheorem{definition}[theorem]{Definition}
\theoremstyle{remark}
\newtheorem{rem}[theorem]{Remark}
\renewcommand{\d}{{\mathrm d}}
\newcommand{\R}{\mathbb{R}}
\renewcommand{\S}{\mathbb{S}}
\newcommand{\N}{\mathbb{N}}
\newcommand{\Q}{\mathcal{Q}}
\newcommand{\F}{\mathcal{F}}
\newcommand{\D}{\mathcal{D}}
\newcommand{\Int}{\mathrm{int\,}}
\newcommand{\diam}{\mathrm{diam}}
\newcommand{\dist}{\mathrm{dist}}
\newcommand{\spt}{\mathrm{spt}}
\begin{document}

\title[A density result for homogeneous Sobolev spaces on planar domains]
{A density result for homogeneous Sobolev spaces\\ on planar domains}

\author{Debanjan Nandi}
\author{Tapio Rajala}
\author{Timo Schultz}

\address{Department of Mathematics and Statistics \\
         P.O. Box 35 (MaD) \\
         FI-40014 University of Jyv\"as\-kyl\"a \\
         Finland}
\email{debanjan.s.nandi@jyu.fi}
\email{tapio.m.rajala@jyu.fi}
\email{timo.m.schultz@student.jyu.fi}

\thanks{All authors partially supported by the Academy of Finland. 
}
\subjclass[2000]{Primary 46E35.}
\keywords{Sobolev space, homogeneous Sobolev space, density}
\date{\today}


\begin{abstract}
We show that in a bounded simply connected  planar domain $\Omega$ the smooth Sobolev functions $W^{k,\infty}(\Omega)\cap C^\infty(\Omega)$ are dense in the homogeneous Sobolev spaces $L^{k,p}(\Omega)$.
\end{abstract}

\maketitle


\section{Introduction}

By the result of Meyers-Serrin \cite{MS} it is known that $C^\infty(\Omega)$ is dense in $W^{k,p}(\Omega)$ for every open set $\Omega$ in $\R^d$.
The space $C^\infty(\R^d)$ is not always dense in $W^{k,p}(\Omega)$, for example when $\Omega$ is a slit disk.
However, a slit disk is not a very appealing example as it is not the interior of its closure. Counterexamples for the density satisfying $\Omega = \text{int}(\overline{\Omega})$ were given by Amick \cite{Amick} and Kolsrud \cite{Kolsrud}. In fact, in these examples even $C(\overline{\Omega})$ is not dense in $W^{k,p}(\Omega)$. Going further in counterexamples, 
O'Farrell \cite{OFarrell} constructed a domain satisfying $\Omega = \text{int}(\overline{\Omega})$  where $W^{k,\infty}(\Omega)$ is not dense in $W^{k,p}(\Omega)$ for any $k$ and $p$. The domain constructed by O'Farrell was infinitely connected. From the recent results of Koskela-Zhang \cite{KZ} and Koskela-Rajala-Zhang \cite{KTZ} we can conclude that this is necessary for such constructions in the plane, since $W^{1,\infty}(\Omega)$ is dense in $W^{1,p}(\Omega)$ for all finitely connected bounded planar domains (see also the earlier work by Giacomini-Trebeschi \cite{GT}). Further examples of domains where $W^{1,p}(\Omega)$ is not dense in $W^{1,q}(\Omega)$ were constructed by Koskela \cite{Koskela} and Koskela-Rajala-Zhang \cite{KTZ}.

In this note we continue the study of density of $W^{k,\infty}(\Omega)$ in $W^{k,p}(\Omega)$.
Let us remark that such density clearly holds in the case where the Sobolev functions in $W^{k,p}(\Omega)$ can be extended to Sobolev functions defined on the whole $\R^2$. By work of Jones \cite{J}, this is true when $\partial\Omega$ is a quasi-circle. (See also the works \cite{Gold1,Gold2,Gold3}.)
Geometric characterizations of Sobolev extension domains are known, especially in the planar simply connected domains when $k=1$, see \cite{BK,Koskela2,Shvartsman,KRZ}.

Being an extension domain is only a sufficient condition for the density. For example, there are Jordan domains $\Omega$ and functions $f \in W^{1,p}(\Omega)$ that cannot be extended to a function in $W^{1,p}(\R^2)$. However, global smooth functions are dense in $W^{1,p}(\Omega)$ for any Jordan domain and any $p \in [1,\infty]$, see Lewis \cite{Lewis} and Koskela-Zhang \cite{KZ}. For $W^{k,p}(\Omega)$ with $k \ge 2$ this is still unknown.

In \cite{SSS} Smith-Stanoyevitch-Stegenga studied the density of $C^\infty(\R^2)$ as well as the density of functions in $C^\infty(\Omega)$ with bounded derivatives, in $W^{k,p}(\Omega)$. For the latter class they obtained a density result assuming $\Omega$ to be starshaped or to satisfy an interior segment condition. For the smaller class of functions $C^\infty(\R^2)$ they also required an extra assumption on the boundary points to be $m_2$-limit points. (See also Bishop \cite{Bishop} for a counterexample on a related question.)

The result of Koskela-Zhang \cite{KZ} showing that $W^{1,\infty}(\Omega)$ is dense in $W^{1,p}(\Omega)$ for every bounded simply connected planar domain was generalized to higher dimensions by Koskela-Rajala-Zhang \cite{KTZ}. They showed that simply connectedness is not sufficient to give such a density result, but Gromov hyperbolicity in the hyperbolic distance is. 
In this paper we provide another generalization to the Koskela-Zhang result by going to higher order Sobolev spaces. 
We show that if we restrict attention to the homogenous norm, then being simply connected is sufficient for domains in the plane.


For a domain $\Omega\subset\mathbb{R}^2$ and $p\in [1,\infty)$, by homogenous Sobolev space $L^{k,p}(\Omega)$ we mean functions with $p$-integrable distributional derivatives of order $k$;
$$L^{k,p}(\Omega)=\{u\in L^1_{loc}(\Omega):\nabla^{\alpha}u\in L^{p}(\Omega),\;\text{if}\;|\alpha|=k\},$$ with semi-norm $\sum_{|\alpha|= k}\|\nabla^{\alpha}u\|_{L^p(\Omega)}$,
where $\alpha$ is any $2$-vector of non-negative integers and  $|\alpha|$ is its $\ell_1$-norm. The (non-homogenous) Sobolev space $W^{k,p}(\Omega)$ is defined as 
$$W^{k,p}(\Omega)=\{u\in L^1_{loc}(\Omega):\nabla^{\alpha}u\in L^{p}(\Omega),\;\text{if}\;|\alpha|\leq k\},$$ with norm $\sum_{|\alpha|\leq k}\|\nabla^{\alpha}u\|_{L^p(\Omega)}$.
\begin{theorem}\label{main_thm}
Let $k\in\mathbb{N}$, $p \in [1,\infty)$ and $\Omega\subset\mathbb{R}^2$ be a bounded simply connected domain. Then the subspace $W^{k,\infty}(\Omega)\cap C^{\infty}(\Omega)$ is dense in the space $L^{k,p}(\Omega)$. 
\end{theorem}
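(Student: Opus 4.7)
The plan is to adapt the Whitney--chain argument of Koskela--Zhang for $k=1$ to higher-order Sobolev spaces by replacing their local constants with local polynomials of degree less than $k$. By the Meyers--Serrin theorem we may reduce to the case $u\in C^{\infty}(\Omega)\cap L^{k,p}(\Omega)$, and it suffices to construct, for every $\epsilon>0$, a function $v\in C^{\infty}(\Omega)\cap W^{k,\infty}(\Omega)$ with $\sum_{|\alpha|=k}\|\nabla^{\alpha}(u-v)\|_{L^{p}(\Omega)}<\epsilon$. The crucial flexibility is that the homogeneous semi-norm ignores polynomials of degree less than $k$, so we may replace $u$ by such polynomials locally at zero cost in the top-order norm on that region.

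First I would fix a Whitney decomposition $\{Q_i\}$ of $\Omega$ and a smooth partition of unity $\{\phi_i\}$ subordinate to a mildly enlarged family $\{Q_i^{*}\}$. For each cube $Q_i$, choose a polynomial $P_i$ of degree less than $k$ adapted to $u$ (for instance the $L^{2}(Q_i^{*})$-projection onto polynomials of degree $<k$). The scaling Poincar\'e inequality then yields
\[
\|\nabla^{j}(u-P_i)\|_{L^{p}(Q_i^{*})}\le C\,\ell(Q_i)^{k-j}\|\nabla^{k}u\|_{L^{p}(Q_i^{*})}\qquad(0\le j\le k).
\]
Next, fix $\lambda>0$ and declare $Q_i$ to be \emph{good} when $|\nabla^{k}u|$ is controlled in a Hardy--Littlewood sense on $Q_i^{*}$ by $\lambda$, and \emph{bad} otherwise. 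The $L^{p}$-mass of $\nabla^{k}u$ concentrated on bad cubes vanishes as $\lambda\to\infty$, and mollifications of $u$ possess bounded $k$-th derivatives on the good region. The candidate $v_{\lambda}$ is then built by blending a bounded modification of $u$ on the good region with the polynomial sum $\sum_{i}\phi_{i}P_{i}$ on the bad region, smoothly interpolated across the interface.

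The principal obstacle is to verify $v_{\lambda}\in W^{k,\infty}(\Omega)$. Differentiation of the partition functions $\phi_i$ contributes factors of $\ell(Q_i)^{-1}$, so the $L^{\infty}$-bounds on $\nabla^{\alpha}v_{\lambda}$ for $|\alpha|\le k$ reduce to controlling the polynomial differences $P_i-P_j$ on neighbouring cubes, weighted by $\ell(Q_i)^{-k}$. For two adjacent good cubes this control follows directly from the Poincar\'e estimate above. Inside a connected bad region, however, one must telescope $P_i-P_j$ along a chain of Whitney cubes leading to a good reference cube; now all $k(k+1)/2$ polynomial coefficients must be tracked, rather than the single constant sufficient for $k=1$.

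This is exactly where the simple connectedness and planarity of $\Omega$ enter: as in the Koskela--Zhang argument for $k=1$, a bounded simply connected planar domain admits a Whitney-chain structure that allows one to connect every Whitney cube to a designated reference good cube by a chain whose combinatorial length and geometry are quantitatively controlled. Granting this chain estimate, the standard Whitney-gluing computation delivers $\|\nabla^{\alpha}v_{\lambda}\|_{L^{\infty}(\Omega)}<\infty$ for all $|\alpha|\le k$, while the top-order error $\|\nabla^{k}(u-v_{\lambda})\|_{L^{p}(\Omega)}$ is dominated by the $L^{p}$-mass of $|\nabla^{k}u|^{p}$ over the bad cubes, which vanishes as $\lambda\to\infty$. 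The technical heart of the proof, and the step I expect to be the most delicate, is the higher-order chain estimate that relates polynomial coefficients along the chain rather than merely scalar averages.
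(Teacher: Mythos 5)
Your outline shares several key ideas with the paper's proof: the Whitney decomposition, local polynomials of degree less than $k$, the scaling Poincar\'e estimate, and the telescoping of polynomial differences along Whitney chains \`a la Jones (the paper's Lemma~\ref{chaining}). You also correctly locate the delicate point at the chain estimate and correctly observe that simple connectedness must be the lever that makes it work. However, there are two genuine gaps.

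First, the $\lambda$-parametrized good/bad decomposition is the wrong engine for producing a $W^{k,\infty}$ approximant. Even on a ``good'' cube where a maximal-function control on $|\nabla^{k}u|$ holds, nothing bounds the \emph{lower-order} derivatives $\nabla^{j}u$ for $j<k$ (nor $u$ itself), and these can blow up as good cubes approach $\partial\Omega$. So keeping a ``bounded modification of $u$'' on the entire good region does not put the candidate $v_\lambda$ in $W^{k,\infty}(\Omega)$; in fact mollification does not help here because the mollification radius must shrink near $\partial\Omega$, destroying the $L^\infty$ bound. The paper avoids this entirely by using a \emph{scale} parameter $n$ rather than a threshold $\lambda$: the approximant agrees with $u$ only on a compact core $D_{n-1}$ (where all derivatives of the smooth $u$ are automatically bounded) and equals a finite partition-of-unity combination of degree-$(k-1)$ polynomials on the complementary annular region, which is manifestly in $W^{k,\infty}$.

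Second, the sentence ``Granting this chain estimate\,\dots'' conceals the actual technical heart of the argument. The constant in Jones' chain lemma depends on the combinatorial length $m$ of the chain, so the whole construction requires chains of \emph{uniformly bounded} length, and this is \emph{not} a consequence of bounded simple connectedness alone: a bad region selected by a maximal-function threshold can wind arbitrarily far from any designated good reference cube, and no general chain bound is available. What the paper does is build a very specific decomposition of $\Omega\setminus D_n$ into cyclically adjacent boundary pieces $H_i$, each meeting a number of boundary Whitney squares bounded independently of $n$; this is established via a quantitative separation lemma (Lemma~\ref{xy}) for the Jordan curve $\partial D_n$, together with crosscuts to $\partial\Omega$ (Lemma~\ref{connecting curve}) and the plane-topology fact that a crosscut splits a simply connected domain into exactly two pieces (Lemma~\ref{lma:domaincut}). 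These are the steps that actually deliver the bounded chain length, and they have no analogue in your proposal. Without some replacement for this machinery, the argument does not close.
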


The approach in \cite{KTZ} differs from ours in that there the approximating functions are defined via shifting matters to the disk via the Riemann mapping. Instead, we directly make a Whitney decomposition of the domain and a rough reflection to define our approximating sequence. We achieve this via an elementary use of simply connectedness in the plane. In both of these approaches the values of the function in a suitable compact set are used to define a smooth function in the entire domain which approximates the original function in Sobolev norm. For this we employ similar tools as used by Jones in \cite{J}. 


In $p$-Poincar\'e domains, that is domains $\Omega$ where a $p$-Poincar\'e inequality
\[
\int_\Omega|u - u_D|^p\,\d x \le C \int_\Omega |\nabla u|^p\,\d x
\]
holds, we can bound the integrals of the lower order derivatives by the integrals of the higher order ones and thus we obtain the following corollary to our Theorem \ref{main_thm}.

\begin{corollary}\label{cor}
Let $k \in \N$, $p \in [1,\infty)$ and $\Omega \subset \R^2$ be a bounded simply connected $p$-Poincar\'e domain.
Then $W^{k,\infty}(\Omega)\cap C^{\infty}(\Omega)$ is dense in the space $W^{k,p}(\Omega)$.
\end{corollary}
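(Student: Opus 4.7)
The plan is to reduce the statement to Theorem~\ref{main_thm} by a polynomial correction of the approximating sequence. Given $u \in W^{k,p}(\Omega) \subset L^{k,p}(\Omega)$, Theorem~\ref{main_thm} first supplies a sequence $(f_n) \subset W^{k,\infty}(\Omega) \cap C^{\infty}(\Omega)$ with $\|\nabla^\alpha(f_n - u)\|_{L^p(\Omega)} \to 0$ for every $|\alpha| = k$. Only the lower-order derivatives then remain to be controlled, and this is exactly what the $p$-Poincar\'e hypothesis is designed for.

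The central construction is to choose, for each $n$, a polynomial $P_n$ of degree at most $k-1$ such that, writing $(v)_\Omega := |\Omega|^{-1} \int_\Omega v\, \d x$,
\[
(\nabla^\alpha(u - f_n - P_n))_\Omega = 0 \qquad \text{for every } |\alpha| \le k-1.
\]
Expanding $P_n = \sum_{|\beta| \le k-1} a_\beta^{(n)} x^\beta$ reduces this to a linear system in the coefficients that is triangular with respect to the partial order on multi-indices: at the top level $|\alpha|=k-1$ the equation collapses to $\alpha!\, a_\alpha^{(n)} = (\nabla^\alpha(u-f_n))_\Omega$, after which one descends through $|\alpha|=k-2,\dots,0$, back-substituting the already determined higher-order coefficients. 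Since $\Omega$ is bounded, $P_n$ and hence $g_n := f_n + P_n$ lie in $W^{k,\infty}(\Omega) \cap C^{\infty}(\Omega)$.

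The final step is a downward induction on $|\alpha|$ showing $g_n \to u$ in $W^{k,p}$. Adding $P_n$ does not alter derivatives of order $k$, so $\|\nabla^\alpha(g_n-u)\|_{L^p(\Omega)} \to 0$ for $|\alpha|=k$ by the choice of $f_n$. For $|\alpha| = j \le k-1$, assuming convergence at level $j+1$, the scalar $v := \nabla^\alpha(g_n - u)$ lies in $W^{1,p}(\Omega)$ and satisfies $(v)_\Omega=0$ by construction of $P_n$, so the $p$-Poincar\'e inequality yields
\[
\|\nabla^\alpha(g_n - u)\|_{L^p(\Omega)} \le C \|\nabla \nabla^\alpha(g_n - u)\|_{L^p(\Omega)} \le C' \sum_{i=1}^{2} \|\nabla^{\alpha+e_i}(g_n - u)\|_{L^p(\Omega)} \to 0,
\]
where $e_i$ denotes the standard basis vectors in $\R^2$. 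Iterating from $j=k-1$ down to $j=0$ produces convergence of all derivatives of orders $0,1,\ldots,k$ in $L^p(\Omega)$, hence $g_n \to u$ in $W^{k,p}(\Omega)$. I do not foresee a genuine obstacle in this plan: the only real points to track are the solvability of the triangular polynomial-correction system and the bookkeeping of the Poincar\'e iteration, both of which are routine.
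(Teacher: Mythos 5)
Your argument is correct and is exactly the natural expansion of the one-sentence hint the paper gives immediately before Corollary~\ref{cor}: iterate the $p$-Poincar\'e inequality to bound the lower-order derivatives by the higher-order ones, descending from $|\alpha|=k-1$ to $|\alpha|=0$. The polynomial correction $P_n$ enforcing zero means at every intermediate level is the key device (which the paper leaves implicit), and your block-triangular solvability argument for its coefficients, together with the observation that $g_n-u\in W^{k,p}(\Omega)$ so that each $\nabla^\alpha(g_n-u)\in W^{1,p}(\Omega)$ and the Poincar\'e inequality is legitimately applicable, is sound.
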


For instance H\"older-domains are $p$-Poincar\'e domains for $p \ge 2$, see Smith-Stegenga \cite{SmithStegenga}. It still remains an open question whether Corollary \ref{cor} holds if one drops the assumption of being a $p$-Poincar\'e domain.

Next we come to the question of density of $C^{\infty}(\R^2)$ functions in $L^{k,p}(\Omega)$ in our setting of bounded simply connected domains. We have the following corollary which is analogous to \cite[Corollary 1.2]{KZ}, where it is shown that $\Omega$ being Jordan is sufficient. A small modification of the argument there applies to our situation as well. See the end of Section \ref{sec:approx} for the proof.

\begin{corollary}\label{cor_global_density}
Let $k\in\N$, $p\in [1,\infty)$ and $\Omega\subset\mathbb{R}^2$ be a Jordan domain. 
Then $C^{\infty}(\R^2)$ is dense in the space $L^{k,p}(\Omega)$.
\end{corollary}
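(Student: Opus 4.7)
The plan is to combine Theorem~\ref{main_thm} with a boundary approximation argument adapted to the Jordan structure of $\partial\Omega$, following the blueprint of \cite[Corollary~1.2]{KZ}. By Theorem~\ref{main_thm} and a standard diagonal argument, it suffices to show that, for any $u \in W^{k,\infty}(\Omega) \cap C^\infty(\Omega)$ and any $\epsilon > 0$, one can find $v \in C^\infty(\R^2)$ with
\[
\sum_{|\alpha|=k}\|\nabla^\alpha(u-v)\|_{L^p(\Omega)}<\epsilon.
\]
The crucial quantitative input carried over from the main theorem is that every derivative $\nabla^\beta u$ of order $|\beta|\le k$ is uniformly bounded on $\Omega$.

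Since $\partial\Omega$ is a Jordan curve, the Schoenflies theorem furnishes a homeomorphism $\Phi:\R^2\to\R^2$ with $\Phi(\overline{\D})=\overline{\Omega}$; setting $\Omega_r:=\Phi(r\D)$ for $r\in(0,1)$ yields a family of compactly contained Jordan subdomains with $\Omega_r\nearrow\Omega$ and $|\Omega\setminus\Omega_r|\to 0$ as $r\to 1$. For each such $r$ I would pick a smooth cutoff $\eta_r\in C^\infty_c(\Omega)$ with $\eta_r\equiv 1$ on $\Omega_r$ and define $v_r:=\eta_r u\in C^\infty_c(\R^2)\subset C^\infty(\R^2)$. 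The Leibniz rule gives
\[
\nabla^\alpha(u-v_r)=(1-\eta_r)\nabla^\alpha u\;-\;\sum_{0<\beta\le\alpha}\binom{\alpha}{\beta}\,\nabla^\beta\eta_r\,\nabla^{\alpha-\beta}u.
\]
The first term is controlled in $L^p(\Omega)$ by $\|\nabla^\alpha u\|_{L^\infty(\Omega)}|\Omega\setminus\Omega_r|^{1/p}\to 0$, and, using the $L^\infty$-bounds on the factors $\nabla^{\alpha-\beta}u$, each remaining term reduces to estimating $\|\nabla^\beta\eta_r\|_{L^p(\Omega\setminus\Omega_r)}$.

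The main technical obstacle is arranging the cutoffs $\eta_r$ so that $\|\nabla^\beta\eta_r\|_{L^p(\Omega\setminus\Omega_r)}\to 0$ for every $1\le|\beta|\le k$. A naive cutoff based on the Euclidean distance to $\partial\Omega$ is too crude, because the $\delta$-neighbourhood of $\partial\Omega$ need not have area $o(\delta^{kp})$ when $\partial\Omega$ is irregular. To bypass this I would instead build $\eta_r$ from the Whitney decomposition of $\Omega$, so that on each Whitney cube $Q$ the derivative of order $|\beta|$ of $\eta_r$ is $O(\ell(Q)^{-|\beta|})$, and then appeal to the same Whitney-cube bookkeeping used in the proof of Theorem~\ref{main_thm}, which exploits the simply connected structure of $\Omega\subset\R^2$, to obtain the required $L^p$-bounds on the cutoff derivatives. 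Letting $r\to 1$ then produces the desired approximating sequence in $C^\infty(\R^2)$, which completes the argument.
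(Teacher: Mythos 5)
Your reduction via Theorem~\ref{main_thm} to approximating a fixed $u\in W^{k,\infty}(\Omega)\cap C^\infty(\Omega)$ is fine, but the cutoff construction that follows has a genuine gap that no amount of Whitney bookkeeping will repair. The problem is the absence of cancellation in the Leibniz terms $\nabla^\beta\eta_r\,\nabla^{\alpha-\beta}u$ with $\beta>0$. The factor $\nabla^{\alpha-\beta}u$ is merely \emph{bounded} on $\Omega$ (this is all $W^{k,\infty}$ gives you); it is not small near $\partial\Omega$, and in fact for $u\equiv c\neq0$ it equals $c$ when $\beta=\alpha$. Thus, whatever Whitney-adapted cutoff you build, you must control $\|\nabla^\beta\eta_r\|_{L^p(\Omega\setminus\Omega_r)}$ outright, with no help from $u$. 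On a transition layer made of Whitney squares of sidelength $\sim 2^{-n}$, each contributes $\sim\ell(Q)^{2-|\beta|p}$ to $\|\nabla^\beta\eta_r\|_{L^p}^p$, and the number of such squares grows at least like a constant times $2^{n}$ for any rectifiable boundary (and faster otherwise), so the total is $\gtrsim 2^{n(|\beta|p-1)}$, which diverges whenever $|\beta|p>1$ --- that is, for every $k\ge1$ and $p\ge1$ except the single borderline case $k=p=1$. The mechanism that makes the main theorem work is precisely that the derivatives of the partition of unity multiply \emph{differences of nearby approximating polynomials} $\nabla^\beta(P_j-P_i)$, and Lemma~\ref{chaining} gives those differences the compensating smallness $\ell(Q)^{k-|\beta|}\|\nabla^ku\|_{L^p}$. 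Cutting down to zero destroys that cancellation.

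The paper's proof of the corollary goes in the opposite direction. Instead of shrinking $\Omega$ and killing $u$ near $\partial\Omega$, it \emph{enlarges} $\Omega$ to a nested family of Lipschitz simply connected domains $G_{s_n}\supset\overline{\Omega}$, obtained by filling in the outside Jordan domain using the Whitney decomposition of $\R^2\setminus\overline{\Omega}$. For $s_n$ large the squares of $\partial\D_n$ remain Whitney-type in $G_{s_n}$, so the construction from Section~\ref{sec:decomp}--\ref{sec:approx} (decomposing $G_{s_n}\setminus\overline{C_n}$ into the $\tilde H_i$'s, attaching polynomials $P_i$, and blending with the partition of unity) produces $u_n\in W^{k,\infty}(G_{s_n})\cap C^\infty(G_{s_n})$ approximating $u$ in $L^{k,p}(\Omega)$, with the crucial cancellation still in force because the approximation near $\partial\Omega$ is by polynomials rather than by zero. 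Since $u_n$ is smooth on a full neighbourhood of $\overline{\Omega}$, one may multiply by a smooth cutoff that is identically $1$ on $\overline{\Omega}$ and compactly supported in $G_{s_n}$; this final cutoff does not change $u_n$ on $\Omega$ at all, so it introduces no new error in $L^{k,p}(\Omega)$ and yields the desired global smooth approximant. If you want to salvage your outline you should replace the inward Schoenflies/cutoff step by this outward-extension argument.
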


In Section \ref{sec:preli}, we collect the necessary ingredients which will be used for defining the approximating sequence; these include a suitable Whitney-type decomposition of a simply connected domain and a local polynomial approximation of Sobolev functions. In Section \ref{sec:decomp} we describe a partition of the domain using the Whitney-type decomposition of Section \ref{sec:preli}, which is needed for obtaining a suitable partition of unity. Then in Section \ref{sec:approx}, we define the approximating sequence and present the necessary estimates for proving Theorem \ref{main_thm} and Corollary \ref{cor_global_density}.

\section{Preliminaries}\label{sec:preli}

For sets $A,B \subset \R^2$ we denote the diameter of $A$ by $\diam(A)$ and the distance between $A$ and $B$ by $\dist(A,B)$. 
We denote by $B(x,r)$ the open ball with center $x \in \R^2$ and radius $r>0$ and more generally, by $B(A,r)$ the open $r$-neighbourhood of a set $A \subset \R^2$.
Given a connected set $E\subset \R^2$ and points $x,y\in E$, we define the inner distance $\d_{E}(x,y)$ between $x$ and $y$ in $E$ to be the infimum of lengths of curves in $E$ joining $x$ to $y$. (Notice that in general the infimum might have value $\infty$.)
We write the inner distance in $E$ between sets $A, B \subset E$ as $\dist_E(A,B)$.
 
With a slight abuse of notation, by a curve $\gamma$ we refer to both, a continuous mapping $\gamma \colon [0,1] \to \R^2$ and its image $\gamma([0,1])$. Given two curves $\gamma_1,\gamma_2\colon [0,1]\to\mathbb{R}^2$ such that $\gamma_1(1)=\gamma_2(0)$, we denote by $\gamma_1\ast\gamma_2 \colon [0,1]\to\mathbb{R}^2$ the concatenated curve $\gamma_{1}\ast\gamma_2(t)=\gamma_1(2t)$ for $t\leq 1/2$ and $\gamma_1\ast\gamma_2(t)=2t-1$ for $t\geq 1/2$. We denote the length of a curve $\gamma$ by $L(\gamma)$.

We will use the following facts in plane topology whose proofs can be found in the book of Newman \cite[Chapter VI, Theorem 5.1 and Chapter V, Theorem 11.8]{Newman}.

\begin{lemma}\label{lma:domaincut}
 Let $\Omega$ be a simply connected domain in $\R^2$ and $\gamma \colon [0,1]\to \R^2$ a continuous curve that is injective on $(0,1)$, whose endpoints $\gamma(0)$ and $\gamma(1)$ are in $\partial \Omega$ and interior $\gamma((0,1))$ in $\Omega$. Then $\Omega \setminus \gamma$ has two connected components, both of which are simply connected.
 
 In the case where $\Omega$ is Jordan and $\gamma$ is homeomorphic to a closed interval, the two connected components of $\Omega \setminus \gamma$ have boundaries $\gamma \cup J_1$ and $\gamma \cup J_2$, where $J_1$ and $J_2$ are the two connected components of $\partial\Omega \setminus \gamma$. 
\end{lemma}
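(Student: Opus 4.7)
The plan is to reduce to the case of the open unit disk $\mathbb{D}$ via the Riemann mapping theorem and then apply the Jordan curve theorem; this route is cleaner than the purely topological argument in Newman. Let $\phi \colon \mathbb{D} \to \Omega$ be a conformal homeomorphism and set $\tilde\gamma(t) := \phi^{-1}(\gamma(t))$ for $t \in (0,1)$, giving a simple curve in $\mathbb{D}$. The first substantive step is to show that $\tilde\gamma$ extends continuously to $[0,1]$ with endpoints in $\partial\mathbb{D}$, using Carath\'eodory's theory of prime ends: since the accessible boundary point $\gamma(0) \in \partial\Omega$ is approached along the curve $\gamma(t)$ as $t \to 0^+$, the preimage $\tilde\gamma(t)$ accumulates at a single point of $\partial\mathbb{D}$, and analogously at $t \to 1^-$. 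When $\gamma(0) \ne \gamma(1)$ these two limits are necessarily distinct.

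Once $\tilde\gamma$ is known to be a crosscut of $\mathbb{D}$ (or a loop based at a single boundary point), the disk case follows from the Jordan curve theorem: in the crosscut situation, $\tilde\gamma$ combined with either of the two open arcs of $\partial\mathbb{D}$ between its endpoints yields a Jordan curve, and an elementary argument shows that the two bounded complementary regions intersected with $\mathbb{D}$ are precisely the two connected components of $\mathbb{D}\setminus\tilde\gamma$, each a Jordan domain and thus simply connected. Pushing forward by the homeomorphism $\phi$ yields the decomposition $\Omega \setminus \gamma = \phi(D_1) \cup \phi(D_2)$ with each $\phi(D_i)$ simply connected, proving the first claim.

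For the second statement, when $\Omega$ is a Jordan domain, Carath\'eodory's boundary extension theorem asserts that $\phi$ extends to a homeomorphism $\overline{\phi} \colon \overline{\mathbb{D}} \to \overline{\Omega}$, so $\tilde\gamma$ automatically extends to a topological arc in $\overline{\mathbb{D}}$ with endpoints in $\partial\mathbb{D}$. Under this homeomorphism the two open arcs of $\partial\mathbb{D}$ separated by the endpoints of $\tilde\gamma$ correspond exactly to the two components $J_1, J_2$ of $\partial\Omega \setminus \gamma$, and the boundary of each component $\phi(D_i)$ becomes $\gamma \cup J_i$ as claimed. The main obstacle is the continuous boundary extension of $\tilde\gamma$ in the general simply connected case, which rests on Carath\'eodory's prime end theory; the purely topological proof in Newman instead proceeds by separation arguments in the plane that exploit simple connectedness directly.
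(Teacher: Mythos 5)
The paper does not prove this lemma at all: it is explicitly cited as two theorems from Newman's book (\emph{Elements of the Topology of Plane Sets of Points}, Chapter~VI, Theorem~5.1 and Chapter~V, Theorem~11.8), which proceed by purely topological separation arguments. Your proposal replaces this with a conformal--analytic route (Riemann mapping, prime end theory, Carath\'eodory extension), which is a genuinely different approach. The plan is sound, and once the machinery is granted the disk case is indeed a straightforward application of the Jordan curve theorem. A few points worth noting: (1)~For the claim that $\tilde\gamma(0^+)\ne\tilde\gamma(1^-)$ when $\gamma(0)\ne\gamma(1)$ you are implicitly invoking Lindel\"of's theorem (a conformal map that has a limit along some curve ending at $\zeta_0\in\partial\mathbb{D}$ has the same nontangential limit there), which should be named. (2)~The parenthetical ``or a loop based at a single boundary point'' conflates two independent things: if $\gamma(0)=\gamma(1)$ the lift $\tilde\gamma$ may still be a genuine crosscut, since the common boundary point may correspond to two distinct prime ends, and conversely only a coincidence of prime ends produces a loop. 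Both cases do yield two simply connected components, but the loop case deserves its own sentence rather than being folded into the crosscut discussion. (3)~Showing that the two Jordan interiors in $\mathbb{D}$ exhaust $\mathbb{D}\setminus\tilde\gamma$ requires an extra line that the interior of $\tilde\gamma\cup A_1$ lies in $\mathbb{D}$. As for the comparison: the Newman route is elementary plane topology and does not presuppose complex analysis; your route is conceptually tidy but imports Fatou/prime-end machinery whose own proofs rest on Jordan-curve--type facts, so the gain in clarity is partly illusory. There is also a mild irony in that the paper deliberately avoids the Riemann mapping (contrasting itself with \cite{KTZ} in the introduction) in favor of direct plane topology, so your proof goes against the grain of the paper's chosen toolkit, even though it is mathematically valid.
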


\subsection{A dyadic decomposition}\label{dyadic_decomposition}

Although it is standard to consider a Whitney decomposition of a domain in $\R^d$ (see for instance Whitney \cite{whitney} or the book of Stein \cite[Chapter VI]{stein}), we will use a precise construction of such a decomposition. We present this construction below. Here and later on we denote the sidelength of a square $Q$ by $l(Q)$.

For notational convenience we start the Whitney decomposition below from squares with sidelength $2^{-1}$. Formally, by rescaling, we may consider all bounded domains $\Omega \subset \R^2$ to have $\diam(\Omega) \le 1$ in which case no Whitney decomposition would have squares larger than the ones used below regardless of the starting scale.

\begin{definition}[Whitney decomposition]\label{def:whitney}
Let $\Omega\subset \R^2$ be a bounded (simply connected) open set. Let $\Q_n$ be the collection of all closed dyadic squares of sidelength $2^{-n}$. 
Define a Whitney decomposition as $\tilde\F\coloneqq\bigcup_{n\in\N}\tilde\F_n$ where the sets $\tilde\F_n$ are defined recursively as follows. Define 
\begin{align} \tilde\F_1\coloneqq \left\{ Q\in \Q_1: \bigcup_{\substack{Q'\in \Q_1 \\ Q'\cap Q\neq \emptyset }}Q'\subset \Omega\right\}
\end{align}
and 
\begin{align}
\tilde\F_{n+1}\coloneqq \left\{ Q\in \Q_{n+1}:Q\not\subset \tilde{F}_n \textrm{ and }\bigcup_{\substack{Q'\in \Q_{n+1} \\ Q'\cap Q\neq \emptyset }}Q'\subset \Omega\right\},
\end{align}
where $\tilde{F_n}=\bigcup_{j\le n}\bigcup_{Q\in\tilde\F_j}Q$.
\end{definition}


\begin{lemma}\label{lma:whitneyprop}
A Whitney decomposition given by Definition \ref{def:whitney} has the following properties.
\begin{enumerate}[label=\textbf({W\arabic*)},ref=(W\arabic*)]
\item $\Omega=\bigcup_{Q\in \tilde\F}Q $ \label{eka}
\item $l(Q)<\dist(Q,\Omega^c)\le3\sqrt2 l(Q)=3\diam(Q)$ for all $Q\in\tilde\F$\label{toka}
\item $\Int Q_1\cap \Int Q_2= \emptyset$ for all $Q_1,Q_2\in \tilde\F, Q_1\ne Q_2$\label{kolmas}
\item If $Q_1,Q_2\in \tilde\F$ and $Q_1\cap Q_2\neq\emptyset$, then $\frac{l(Q_1)}{l(Q_2)}\le 2$.\label{neljas}
\end{enumerate}
\end{lemma}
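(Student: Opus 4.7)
The plan is to verify (W1)--(W4) directly from the recursive selection rule in Definition \ref{def:whitney}, working entirely within the nested dyadic grids $\Q_n$ and keeping track of the level at which each Whitney square is selected.

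For (W1) and (W3) the arguments are essentially bookkeeping. One inclusion in (W1) is immediate from the selection rule. For the reverse, given $x \in \Omega$, openness yields a ball $B(x,r) \subset \Omega$; for $n$ large enough the $3\times 3$ neighbourhood of the dyadic square $Q \in \Q_n$ containing $x$ lies in this ball, so $Q$ satisfies the neighbourhood condition at stage $n$, and then either $Q \in \tilde\F_n$ or $Q$ is already contained in $\tilde F_{n-1}$ by an earlier-stage Whitney square. For (W3), if $Q_1 \in \tilde\F_n$ and $Q_2 \in \tilde\F_m$ with $n < m$ had overlapping interiors, nesting of dyadic squares would force $Q_2 \subset Q_1 \subset \tilde F_n \subset \tilde F_{m-1}$, contradicting $Q_2 \not\subset \tilde F_{m-1}$.

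For (W2), the lower bound $l(Q) < \dist(Q,\Omega^c)$ comes from compactness: the closed $3 \times 3$ block $B$ around $Q$ lies in the open set $\Omega$, hence is positively separated from $\Omega^c$, and any segment from a point of $Q$ to a point of $\Omega^c$ must traverse at least $l(Q)$ inside $B$ before exiting and then a further positive distance to reach $\Omega^c$. For the upper bound at $n=1$, I will use the normalisation $\diam(\Omega) \le 1$: any $y \in \partial\Omega$ satisfies $\dist(Q,\Omega^c) \le \dist(Q,y) \le \diam(\Omega) \le 1 \le 3\sqrt 2\, l(Q)$, since $l(Q) = 1/2$. For $n \ge 2$, the dyadic parent $Q^* \in \Q_{n-1}$ of $Q$ can be neither in $\tilde\F_{n-1}$ nor contained in $\tilde F_{n-2}$ (each would give $Q \subset \tilde F_{n-1}$, contradicting $Q \in \tilde\F_n$), so the neighbourhood condition for $Q^*$ at stage $n-1$ must have failed: some $Q' \in \Q_{n-1}$ with $Q' \cap Q^* \ne \emptyset$ contains a point $y \in \Omega^c$. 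A short case analysis, with the extremal configuration being $Q$ and $Q'$ sitting at diagonally opposite corners of the $3 \times 3$ arrangement around $Q^*$, yields $\dist(Q,y) \le 3\sqrt 2\, l(Q)$ for any such $y$.

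For (W4), I argue by contradiction, assuming $Q_1 \in \tilde\F_n$ and $Q_2 \in \tilde\F_m$ meet with $n \ge m+2$. The dyadic ancestor $R \in \Q_{m+1}$ of $Q_1$ still intersects $Q_2$. The key geometric fact is that for \emph{any} $R \in \Q_{m+1}$ meeting a $Q_2 \in \Q_m$, the $3 \times 3$ neighbourhood of $R$ at level $m+1$ is contained in the $3 \times 3$ neighbourhood of $Q_2$ at level $m$; this reduces to the one-line dyadic check that the coordinate distance between the centres of $R$ and $Q_2$ is at most $\tfrac{3}{2}\, l(R)$. Since the latter neighbourhood lies in $\Omega$, so does the former, so $R$ satisfies the neighbourhood condition at stage $m+1$, and hence either $R \in \tilde\F_{m+1}$ or $R \subset \tilde F_m$. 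In either case $Q_1 \subset R \subset \tilde F_{m+1} \subset \tilde F_{n-1}$, contradicting $Q_1 \in \tilde\F_n$. The main obstacle I expect is obtaining the sharp constant $3\sqrt 2$ in the upper bound of (W2), which forces the short but genuine case analysis on how $Q'$ sits next to $Q^*$ (edge-sharing versus corner-sharing) and where $Q$ lies inside $Q^*$; the remaining verifications are essentially bookkeeping within the dyadic hierarchy.
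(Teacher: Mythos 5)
Your argument is correct and follows essentially the same path as the paper's proof: (W1) and (W3) by nesting and choosing the scale so the $3\times 3$ block fits inside $\Omega$, (W2) via the $3\times 3$ block giving the lower bound and the dyadic parent's failed neighbourhood condition giving the upper bound (the paper phrases this contrapositively as a contradiction, but it is the same computation, including the $3\sqrt2\,l(Q)$ extremal corner-to-corner distance), and (W4) by passing to the dyadic ancestor at the intermediate level and showing its $3\times 3$ block lies inside the larger square's $3\times 3$ block. The only cosmetic differences are the explicit handling of $n=1$ in (W2) — which the paper sidesteps via the $\diam(\Omega)\le 1$ normalisation making $\tilde\F_1$ empty — and a relabelling in (W4).
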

\begin{proof}
Although the proof is very elementary, we give it here for completeness.

For \ref{eka}, take any $x \in \Omega$ and $n \in \N$ such that $x \in Q \in \Q_n$, where $2^{-n+2}\sqrt{2} < \dist(x,\Omega^c) \le 2^{-n+3}\sqrt{2}$. Then for any $Q' \in \Q_n$ with $Q' \cap Q \ne \emptyset$ we have $Q' \subset \Omega$. Hence by definition either $Q \in \tilde F_n$ or $x \in Q \subset Q'' \in \tilde \F_i$ for some $i < n$.

In order to see \ref{toka}, let $Q \in \tilde \F_n$. Then all $Q' \subset \Omega$ for all $Q' \in \Q_n$ with $Q'\cap Q \ne \emptyset$. Consequently, $\dist(Q,\Omega^c) > 2^{-n} = l(Q)$.
For the upper bound, suppose $\dist{Q,\Omega^c}> 3\sqrt{2}2^{-n}$. Let $Q_2 \in \Q_{n-1}$ be such that $Q \subset Q_2$. Then $\dist(Q_2,\Omega^c) > \sqrt{2}2^{-n+1}$ and so $Q_3 \subset \Omega$ for all $Q_3 \in \Q_{n-1}$ for which $Q_2\cap Q_3 \ne \emptyset$. Thus $Q_2 \in \tilde F_{n-1}$ or $Q_2 \subset Q_4 \in \tilde F_i$ for some $i < n-1$. In either case, $Q \notin \tilde \F_n$ giving a contradiction.

Property \ref{kolmas} holds by the recursion in the definition and the fact that the dyadic squares are nested.

Suppose \ref{neljas} is not true. Then there exist $Q_1 \in \tilde\F_n$ and $Q_2 \in \tilde \F_m$ with $n < m - 1$ and $Q_1\cap Q_2\neq\emptyset$. Let $Q_3 \in \tilde F_{n+1}$ be such that $Q_2 \subset Q_3$. Then
\[
\bigcup_{\substack{Q'\in \Q_{n+1} \\ Q'\cap Q_3\neq \emptyset }}Q'\subset 
\bigcup_{\substack{Q'\in \Q_{n} \\ Q'\cap Q_1\neq \emptyset }}Q'\subset \Omega
\]
and so either $Q_3 \in \tilde \F_{n+1}$ or $Q_3 \subset \tilde F_{n}$. In both cases $Q_2 \subset \tilde F_{n+1}$ and so $Q_2 \notin \tilde \F_m$.
\end{proof}

By a chain of dyadic squares $\{Q_i\}_{i=1}^m$ we mean a collection of sets $Q_i \in \tilde{\F}$ such that 
$Q_i \cap Q_{i+1}$ is a non-degenerate line segment for all $i \in \{1,\dots,m-1\}$. We say that the chain connects $Q_1$ and $Q_m$.

\subsection{Approximating polynomials}\label{sec:approxpoly}

We record here the following two Lemmas from \cite{J} which will be used when estimating the approximation in Section \ref{sec:approx}. 
\begin{lemma}[Lemma 2.1, \cite{J}] \label{norm_equivalence}
Let $Q$ be any square in $\mathbb{R}^2$ and $P$ be a polynomial of degree $k$ defined in $\mathbb{R}^2$. Let $E,F\subset Q$ be such that $|E|,|F|>\eta |Q|$ where $\eta>0$. Then
$$\|P\|_{L^p(E)}\leq C(\eta, k)\|P\|_{L^p(F)}.$$
\end{lemma}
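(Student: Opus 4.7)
The plan is to prove the inequality by a finite-dimensional compactness argument, exploiting the fact that polynomials of degree at most $k$ on $\R^2$ form a finite-dimensional vector space on which all reasonable norms are equivalent.

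First I would reduce to a normalized square. Under an affine change of variables $x \mapsto l(Q)x + x_0$ both $\|P\|_{L^p(E)}$ and $\|P\|_{L^p(F)}$ scale by the same factor $l(Q)^{2/p}$ (after relabeling the polynomial and the sets), and the measure condition $|E|,|F| > \eta |Q|$ is preserved. So I may assume $Q = [0,1]^2$. Next, since trivially $\|P\|_{L^p(E)} \le \|P\|_{L^p(Q)}$, it suffices to establish
\[
\|P\|_{L^p(Q)} \le C(\eta, k) \|P\|_{L^p(F)}
\]
for every polynomial $P$ of degree at most $k$ and every measurable $F \subset Q$ with $|F| > \eta$; applying this twice (to $F$ and to $E$) gives the claim.

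For the key inequality I would argue by contradiction. Suppose it fails; then there exist polynomials $P_n$ of degree at most $k$ with $\|P_n\|_{L^p(Q)} = 1$ and measurable sets $F_n \subset Q$ with $|F_n| > \eta$ such that $\|P_n\|_{L^p(F_n)} \to 0$. The space $\mathcal{P}_k$ of polynomials of degree $\le k$ on $\R^2$ is finite-dimensional, so on $Q$ the norms $\|\cdot\|_{L^p(Q)}$ and $\|\cdot\|_{L^\infty(Q)}$ are equivalent. Consequently $\sup_n \|P_n\|_{L^\infty(Q)} < \infty$, the coefficients of $P_n$ stay in a bounded set, and after passing to a subsequence $P_n \to P$ uniformly on $Q$ for some $P \in \mathcal{P}_k$ with $\|P\|_{L^p(Q)} = 1$.

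Simultaneously, the characteristic functions $\chi_{F_n}$ lie in the unit ball of $L^\infty(Q)$; by the Banach--Alaoglu theorem, a further subsequence converges weakly-$\ast$ to some $g \in L^\infty(Q)$ with $0 \le g \le 1$ and $\int_Q g \, \d x \ge \eta$. Since $|P_n|^p \to |P|^p$ uniformly on $Q$, we may pass to the limit:
\[
\|P_n\|_{L^p(F_n)}^p = \int_Q \chi_{F_n} |P_n|^p \,\d x \;\longrightarrow\; \int_Q g|P|^p \,\d x.
\]
The left-hand side tends to $0$, so $g |P|^p = 0$ almost everywhere, which forces $P \equiv 0$ on the positive-measure set $\{g > 0\}$. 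As $P$ is a polynomial vanishing on a set of positive measure, $P \equiv 0$, contradicting $\|P\|_{L^p(Q)} = 1$.

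There is no real obstacle here; the only point requiring slight care is the passage to the limit in $\int \chi_{F_n} |P_n|^p$, which mixes uniform and weak-$\ast$ convergence, but this is handled by writing $\int \chi_{F_n}(|P_n|^p - |P|^p) + \int \chi_{F_n} |P|^p$ and bounding the first term by $\||P_n|^p - |P|^p\|_{L^\infty(Q)}$.
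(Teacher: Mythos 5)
Your proof is correct, and it is worth noting up front that the paper does not supply its own proof of this lemma: it simply records it as \cite[Lemma~2.1]{J} for later use, so there is no internal argument to compare against. Judged on its own, your compactness proof is sound. The reduction to $Q=[0,1]^2$ by affine change of variables is valid (both sides pick up the same factor $l(Q)^{2/p}$, and the density condition $|E|,|F|>\eta|Q|$ is scale-invariant). The contradiction argument works: finite-dimensionality of the space of polynomials of degree at most $k$ gives equivalence of $\|\cdot\|_{L^p(Q)}$ and $\|\cdot\|_{L^\infty(Q)}$, hence a uniformly convergent subsequence $P_n\to P$ with $\|P\|_{L^p(Q)}=1$; Banach--Alaoglu (with $L^1(Q)$ separable) gives $\chi_{F_n}\overset{*}{\rightharpoonup} g$ along a further subsequence with $0\le g\le 1$ and, after also passing to a subsequence on which $|F_n|$ converges, $\int_Q g\ge\eta$; the split $\int\chi_{F_n}|P_n|^p=\int\chi_{F_n}(|P_n|^p-|P|^p)+\int\chi_{F_n}|P|^p$ correctly handles the two modes of convergence; and a nonzero polynomial cannot vanish on a set of positive planar Lebesgue measure, so $g|P|^p=0$ a.e.\ forces $P\equiv 0$, a contradiction.

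One inessential slip: you say you apply the key inequality $\|P\|_{L^p(Q)}\le C\|P\|_{L^p(F)}$ ``twice (to $F$ and to $E$)'', but only one application (to $F$) is needed, combined with the trivial bound $\|P\|_{L^p(E)}\le\|P\|_{L^p(Q)}$ coming from $E\subset Q$.

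As for the route: the compactness argument is shorter and cleaner but yields no explicit constant $C(\eta,k)$. The classical direct proofs of such statements (via Remez-type or Markov-brothers inequalities for polynomials on sets of prescribed relative measure) do give explicit dependence of $C$ on $\eta$ and $k$. Since the present paper only needs the existence of such a constant, your soft argument is fully adequate, and in fact is arguably the more natural thing to write when the lemma is being proved from scratch rather than quoted.
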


Given a function $u\in C^{\infty}(\Omega)$ and a bounded set $E\subset\Omega$, we define (see \cite{J}) the polynomial approximation of $u$ in $E$ , $P_k(u,E)$ to be the polynomial of order $k-1$ which satisfies
$$\int_E \nabla^{\alpha}(u-P_k(u,E))=0$$ for each $\alpha=(\alpha_1,\alpha_2)$ such that $|\alpha|=\alpha_1+\alpha_2\leq k-1$. Once $k$ is fixed, we denote the polynomial approximation of $u$ in a dyadic square $Q$ as $P_Q$

The next lemma is a consequence of Poincar\'e inequality for Lipschitz domains.

\begin{lemma}[Lemma 3.1, \cite{J}]\label{chaining}
 Let $\Omega \subset \R^2$ be a bounded simply connected domain
 and $\tilde \F$ a Whitney decomposition of $\Omega$. 
 Fix $\alpha$ such that $|\alpha|\leq k$.
 Let 
 $\{Q_i\}_{i=1}^m$ in $\tilde\F$ be a chain of dyadic squares in $\tilde{\mathcal{F}}$.
  Then we have 
 $$
 \|\nabla^{\alpha}(P_{Q_1}-P_{Q_m})\|_{L^p(Q_1)}\leq Cl(Q_1)^{k-|\alpha|}\|\nabla^ku\|_{L^p(\bigcup_{i=1}^m Q_i)},
 $$
 where $\nabla^k u$ is the vector $(\nabla^{\alpha}u)_{|\alpha|=k}$ normed by the $\ell_2$-norm and $C=C(m)$.
\end{lemma}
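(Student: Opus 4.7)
The case $|\alpha| = k$ is trivial since each $P_{Q_i}$ has degree at most $k-1$, so $\nabla^\alpha(P_{Q_1}-P_{Q_m}) \equiv 0$; I will assume $|\alpha|\le k-1$ throughout. The starting point is the telescoping identity
\[
P_{Q_1} - P_{Q_m} = \sum_{i=1}^{m-1}(P_{Q_i} - P_{Q_{i+1}}),
\]
so after applying the triangle inequality it suffices to bound each consecutive term $\|\nabla^\alpha(P_{Q_i}-P_{Q_{i+1}})\|_{L^p(Q_1)}$ by $Cl(Q_1)^{k-|\alpha|}\|\nabla^k u\|_{L^p(Q_i\cup Q_{i+1})}$ and sum over $i$.

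For a single consecutive pair $Q,Q'$ with $Q\cap Q'$ a non-degenerate edge, property \ref{neljas} gives $l(Q)$ and $l(Q')$ within a factor of $2$, so $D := Q\cup Q'$ is a Lipschitz domain of diameter comparable to $l(Q)$ with uniformly bounded Lipschitz character. The plan is to introduce the auxiliary polynomial $\tilde P := P_k(u,D)$ and apply a Bramble--Hilbert / Poincar\'e inequality on each of $Q$, $Q'$, and $D$ to bound the three quantities $\|\nabla^\alpha(u-P_Q)\|_{L^p(Q)}$, $\|\nabla^\alpha(u-P_{Q'})\|_{L^p(Q')}$, and $\|\nabla^\alpha(u-\tilde P)\|_{L^p(D)}$ all by $C\,l(Q)^{k-|\alpha|}\|\nabla^k u\|_{L^p(D)}$. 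Two triangle inequalities then yield corresponding bounds for $P_Q-\tilde P$ on $Q$ and for $P_{Q'}-\tilde P$ on $Q'$. Transferring the $L^p(Q')$ estimate for the polynomial $P_{Q'}-\tilde P$ over to $L^p(Q)$ via Lemma \ref{norm_equivalence} (applied on the smallest square containing $D$, inside which both $Q$ and $Q'$ occupy a definite fraction of the area), and one final triangle inequality, gives the desired adjacent-pair estimate.

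To pass from $L^p(Q_i)$ to $L^p(Q_1)$ I would apply Lemma \ref{norm_equivalence} iteratively along the chain to the polynomial $\nabla^\alpha(P_{Q_i}-P_{Q_{i+1}})$: each step between neighbours $Q_j, Q_{j+1}$ incurs one constant depending only on $k$, because by \ref{neljas} their sidelengths differ by at most a factor of $2$ and hence both are comparable subsets of any common enclosing square. After at most $m-1$ such steps this contributes a factor $C(m,k)$. Property \ref{neljas} also forces $l(Q_i)\le 2^{m-1}l(Q_1)$, so the scale factor $l(Q_i)^{k-|\alpha|}$ can be replaced by $l(Q_1)^{k-|\alpha|}$ at the cost of one more constant depending on $m$ and $k$. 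Summing the $m-1$ consecutive contributions and using $\bigcup_{i=1}^{m-1}(Q_i\cup Q_{i+1})\subset \bigcup_{i=1}^m Q_i$ completes the proof.

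The only substantive step is the adjacent-pair estimate; everything else is bookkeeping made routine by the bounded sidelength ratio of \ref{neljas}. The main technical care is checking that the Bramble--Hilbert constants on $Q$, $Q'$ and $D$ are scale-invariant (by a standard dilation argument together with the uniform Lipschitz character of $D$) and that Lemma \ref{norm_equivalence} genuinely applies across neighbouring Whitney squares via a common enclosing square of comparable size; both are harmless but are the places where one could lose control of constants if careless.
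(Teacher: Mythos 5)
Your proof is correct and is essentially the standard Jones chaining argument: the lemma is cited from \cite{J} in the paper, and the authors note only that it is a consequence of the Poincar\'e inequality for Lipschitz domains, which is precisely the mechanism you spell out (telescoping, Bramble--Hilbert/Poincar\'e on $Q$, $Q'$ and the Lipschitz union $D=Q\cup Q'$ to compare adjacent polynomial fits, and Lemma~\ref{norm_equivalence} to propagate polynomial estimates from $L^p(Q_i)$ back to $L^p(Q_1)$ through the chain). The bookkeeping on constants via the bounded sidelength ratio~\ref{neljas} is also handled correctly.
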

In what follows, given $\beta=(\beta_1,\beta_2)$ and $\alpha=(\alpha_1,\alpha_2)$, we write $\beta\leq \alpha$ if the inequality holds coordinate-wise.

\section{Decomposition of the domain}\label{sec:decomp}

\begin{figure}
\includegraphics[width=0.7\textwidth]{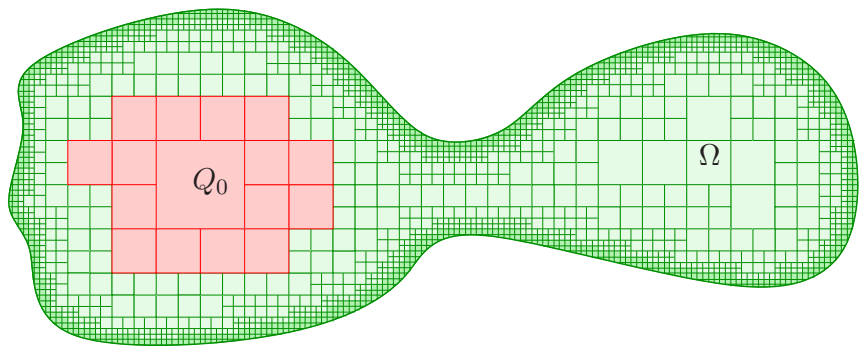}
     \caption{A core part $D_n$ is selected from the Whitney decomposition of $\Omega$ by taking the connected component containing $Q_0$ of the interior of the union of Whitney squares with sidelength at least $2^{-n}$.}
   \label{fig:connected}
 \end{figure}
 
From now on we fix a bounded simply connected domain $\Omega \subset \R^2$ and a
 Whitney decomposition $\tilde \F$ of $\Omega$ given by Definition \ref{def:whitney}. 
For our purposes we need to choose at each level a nice enough subcollection of $\tilde\F_n$, namely we take connected components of the Whitney decomposition (see Figure \ref{fig:connected}).  More precisely we fix $Q_0\in\tilde\F_1$ and for each $n \in \N$ let $C_n$ be the connected component of the interior of $\tilde{F_n}$ that has $\Int Q_0$ as a subset. We define 
\[
\F_{n,j}\coloneqq \{Q\in \tilde\F_j: \Int Q\subset C_n\}
\]
and using this the families of squares
\[
  \F_n\coloneqq \F_{n,n} ,\qquad  \D_n\coloneqq \bigcup_{j\le n}\F_{n,j} 
\]
and the corresponding sets for two of the above collections by
\[
F_n\coloneqq \bigcup_{Q\in\F_n}Q \quad\text{ and }\quad D_n\coloneqq \bigcup_{Q\in\D_n}Q=\overline{C}_n.
\]
The collection of boundary layer squares in $\D_n$ is denoted by
\[
\partial \D_n\coloneqq \left\{Q\in \D_n: Q\cap \overline{(\Omega\setminus D_n)}\neq\emptyset\right\}.
\]

With this notation we have the following lemma.

\begin{lemma}\label{properties}The above collections have the properties:
\begin{enumerate}[label={(\roman*)},ref=(\roman*)]
\item $\D_n\subset \D_{n+1}$ for all $k\in\N$.\label{yks}
\item $\Omega=\bigcup_{n\in \N}D_n $. \label{kaks}
\item If $Q_1,Q_2\in \F_n$ and $Q_1\cap Q_2$ is a singleton, then there exists $Q_3\in\D_n$ for which $Q_1\cap Q_2\cap Q_3\neq \emptyset$. \label{kol}
\item If $Q\in \partial \D_n$, then $Q\in \F_n$. \label{nel}
\item If $Q\in \partial \D_n$, then $Q\cap \overline{(\Omega\setminus \tilde F_n)}\neq\emptyset$.\label{viis}
\item The set $C_n$ is simply connected.\label{kuus}
\end{enumerate}
\end{lemma}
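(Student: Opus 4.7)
My plan is to verify the six properties essentially in order, using the recursive definition and (W1)--(W4) for the routine ones, and plane topology for (iii) and (vi); the main obstacle will be the case analysis in (iii). Property (i) follows from $\tilde F_n \subset \tilde F_{n+1}$, since $C_n$ is a connected subset of $\Int(\tilde F_{n+1})$ meeting $\Int Q_0$ and hence contained in $C_{n+1}$. For (ii), join $x \in \Omega$ to $\Int Q_0$ by a continuous curve $\gamma \subset \Omega$; a small $\delta$-neighborhood of $\gamma$ is compactly contained in $\Omega$ and therefore meets only finitely many Whitney squares of uniformly bounded level, so for $n$ large that neighborhood lies in $\Int(\tilde F_n)$ and, being connected through $\Int Q_0$, in $C_n$.

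For (iv) and (v) I would use the following dichotomy: any point $y \in \tilde F_n \setminus \partial \tilde F_n$ lies in $\Int(\tilde F_n)$, and if $y$ is in addition a limit of $C_n$, then by connectedness $y \in C_n$ itself. This gives $\partial C_n \cap \Omega \subset \overline{\Omega \setminus \tilde F_n}$. Property (v) then follows because, for $Q \in \partial \D_n$, any $q \in Q \cap \overline{\Omega \setminus D_n}$ cannot lie in the open set $\Int Q \subset C_n$, so $q \in \partial Q \cap \partial C_n \subset \overline{\Omega \setminus \tilde F_n}$. For (iv), (v) produces a sequence $q_k \to q \in Q$ with each $q_k$ in a Whitney square $Q_k$ of level $> n$; (W2) together with the local finiteness of $\tilde \F$ shows that the squares $Q_k$ take only finitely many values, so a subsequence stabilizes to a fixed $Q_k$ that then contains $q$. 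Property (W4) applied to $Q \cap Q_k \ni q$ yields $l(Q) = 2^{-n}$, i.e.\ $Q \in \F_n$.

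The decisive step is (iii). Write $p = Q_1 \cap Q_2$; the four $\Q_n$-squares at $p$ are $Q_1, Q_2$ (diagonally placed) and two others $B, D$, both contained in $\Omega$ because $Q_1, Q_2 \in \tilde\F_n$. I would split into cases according to whether $B$ (or symmetrically $D$) itself lies in $\tilde\F_n$, is contained in a coarser Whitney square $Q' \in \tilde\F_{j'}$ with $j' < n$, or is subdivided beyond level $n$. In the first two cases, dyadic nesting and the position of $B$ at $p$ force the candidate square to share an edge of positive length with $Q_1$, placing its interior in $C_n$ and giving the required $Q_3$. The genuinely hard case is when both $B$ and $D$ are subdivided: by (W2) and the admission rule for $\tilde \F_n$, points of $\Omega^c$ then appear in the $\Q_n$-squares diagonally opposite to $p$ through $B$ and $D$, call them $T_B$ and $T_D$. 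I would rule this case out using simple connectedness: any curve in $C_n$ from $\Int Q_1$ to $\Int Q_2$, concatenated with short arcs through $Q_1, Q_2$ across $p$, bounds a Jordan loop in $\Omega$; by simple connectedness the bounded complementary region lies in $\Omega$, yet by the local geometry at $p$ this region must contain either a point of $T_B \cap \Omega^c$ or a point of $T_D \cap \Omega^c$, a contradiction.

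For (vi) I would show that $\mathbb S^2 \setminus C_n$ is connected. Writing the complement as $(\mathbb S^2 \setminus \Omega) \cup (\Omega \setminus C_n)$, the first piece is connected because $\Omega$ is simply connected, so it suffices to show that the closure of every component $K$ of $\Omega \setminus C_n$ meets $\partial \Omega$. By the dichotomy used above, $\partial K \cap \Omega \subset \overline{\Omega \setminus \tilde F_n}$, so $K$ must extend into the unresolved region $\Omega \setminus \tilde F_n$, which is covered by Whitney squares of level $> n$ whose distances to $\partial \Omega$ tend to zero with the level; hence $K$ cannot be compactly contained in $\Omega$ and $\overline K$ meets $\partial \Omega$.
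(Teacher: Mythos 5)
Your treatment of (i), (ii), (iv) and (v) is sound; you prove (v) before (iv) and then extract (iv) from it, which is a valid reordering of the paper's argument (the paper does (iv) via \ref{neljas} directly), and your dichotomy ``$\partial C_n\cap\Omega\subset\overline{\Omega\setminus\tilde F_n}$'' is a nice clean formulation.

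The real problem is in (iii), in the ``hard case''. You assert that ``by the local geometry at $p$ this region must contain either a point of $T_B\cap\Omega^c$ or a point of $T_D\cap\Omega^c$.'' But the local picture at $p$ only tells you that the bounded complementary region $A$ of your Jordan loop $\gamma$ contains $\Int B$ (say) near $p$; it says nothing a priori about $T_B$, which is a full square away, diagonally through $B$. In principle $\gamma$ could pass between $B$ and $T_B$ and leave $T_B$ (or the relevant point of $T_B\cap\Omega^c$) in the unbounded component, in which case there is no contradiction at all. Indeed, since $A\subset\Omega$ by simple connectedness, one automatically has $A\cap\Omega^c=\emptyset$; so the assertion you need cannot be ``seen'' locally, it has to be derived from the hard-case hypothesis. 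Two facts are needed and neither is supplied: first, that $\gamma$ is actually disjoint from $T_B$ altogether. This is true, but for a non-obvious reason: in the hard case $T_B\cap\Omega^c\neq\emptyset$, so $T_B$ is neither in $\tilde\F_n$ nor contained in a coarser Whitney square, hence $T_B\cap\Int\tilde F_n=\emptyset$; since $\gamma\subset Q_1\cup Q_2\cup C_n\subset Q_1\cup Q_2\cup\Int\tilde F_n$ and $T_B$ is disjoint from $Q_1\cup Q_2$, indeed $T_B\cap\gamma=\emptyset$. Second, that the shared corner of $B$ and $T_B$ (the corner of $B$ opposite $p$, not $p$ itself) lies in $A$, so that the connected set $T_B$, being disjoint from $\gamma$, must lie entirely in $A$. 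This is the geometry that actually closes the argument, and it lives at the far corner of $B$, not at $p$. The paper's own proof sidesteps this phrasing by proving instead that every neighbour of $B$ lies in $\Omega$ (by showing each one is either in $A$ or meets $\gamma$), concluding that $B$ should have been a Whitney square.

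There is a smaller but related gap in (vi). From ``$K$ meets $\overline{\Omega\setminus\tilde F_n}$'' you jump to ``$K$ cannot be compactly contained in $\Omega$.'' The heuristic about distances to $\partial\Omega$ tending to zero with the level does not by itself give this: a compactly contained $K$ would be covered by Whitney squares of boundedly many levels, all of which have distance to $\partial\Omega$ bounded below, and nothing you have said rules this out. What rules it out is essentially the same cut-and-Jordan-curve argument as in (iii) (the paper explicitly says ``by similar argument as in \ref{kol}''): if a component of $\Omega\setminus C_n$ were surrounded by $D_n$, one produces a Jordan loop in $C_n$ enclosing a $\Q_n$-square $Q\not\subset D_n$ with $Q\cap D_n$ an edge, and then the Jordan curve argument forces $Q\in\D_n$, a contradiction. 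Your reformulation via connectedness of $\S^2\setminus C_n$ is a perfectly reasonable alternative framing, but the decisive step still needs to be carried out; as written it is a genuine gap.
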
 

\begin{proof}
The property \ref{yks} is obvious by the definitions of $\F_{n,j}$ and $\D_n$ since $C_n\subset C_{n+1}$. 

For \ref{kaks} it suffices to prove that for every $Q\in \tilde \F$ there exists $n\in\N$ so that $Q\in \D_n$. Let $Q\in \F_n$. Since $\Omega$ is connected and open, there exists a path $\gamma$ in $\Omega$ joining $Q$ to $Q_0$. By the fact that $\tilde F_j\subset \Int \tilde F_{j+1}$ and the property \ref{eka} of the decomposition $\tilde\F$ we have that $\Omega=\cup_{j\in\N}\Int\tilde F_j$. Then by the compactness of $\gamma$ there exists $m\ge n$ so that $\gamma\subset \Int \tilde F_m$. Hence $Q\in\D_m$.

\begin{figure}
\includegraphics[width=0.5\textwidth]{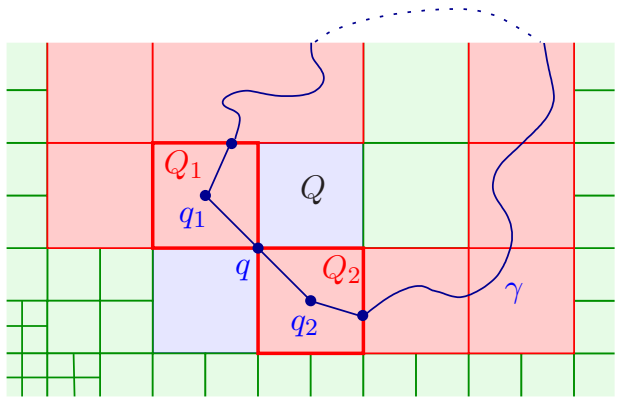}
     \caption{The constructed Jordan curve $\gamma$ in the proof of Lemma 3.1 $(\ref{kol})$ has in its interior domain a dyadic square $Q$ that also has to be an element of $\D_n$.}
   \label{fig:curves}
 \end{figure}
 
For \ref{kol} let $Q_1,Q_2\in \F_n$ be so that $Q_1\cap Q_2$ is a singleton $\{q\}$. Assume that the claim is false. Then for the two squares $Q\in\Q_n$ that intersect both $Q_1$ and $Q_2$ it is true that $Q\not\in \tilde\F_n$ and $Q\not\subset Q'$ for all $Q'\in\tilde\F_{n-1}$. Let $q_1$ and $q_2$ be the centres of the squares $Q_1$ and $Q_2$ respectively. Consider a curve $\gamma'\colon [0,1]\to \Omega$ for which $\gamma'_0=q_1$, $\gamma'_1=q_1$ and $\gamma'\subset C_n$. Such a curve exists by the definition of $C_n$. We may also assume that $\gamma'$ is an injective curve. Let $t_0\coloneqq \sup\{t: \gamma'(t)\in Q_1\}$ and $t_1\coloneqq \inf\{t\ge t_0: \gamma'(t)\in Q_2\}$. Define a Jordan curve 
\[
\gamma\coloneqq \gamma^1*\gamma^2*\gamma'\lvert_{[t_0,t_1]}*\gamma^3*\gamma^4,
\]
where $\gamma^1$, $\gamma^2$, $\gamma^3$ and $\gamma^4$ correspond to the line segments $[q, q_1]$, $[q_1,\gamma'_{t_0}]$, $[\gamma'_{t_1}, q_2]$ and $[q_2,q]$ respectively. By Jordan curve theorem $\gamma$ divides $\R^2$ into two components, one of which is precompact (see Figure \ref{fig:curves}). 
Denote the precompact component by $A$.

For small enough ball $B$ around $q$ we have by the definition of $\gamma$ that $B\setminus\gamma$ has exactly two components. Since $\gamma$ is a Jordan curve one of those components has to contain an interior point of $A$ and thus the whole component lies inside $A$. On the other hand that component has to intersect with one of the dyadic squares in $\Q_n$ touching both $Q_1$ and $Q_2$ (but being different from $Q_1$ and $Q_2$). Let $Q\in \Q_n$ be that square. Now for all the neighbouring squares $\tilde Q\in \Q_n$ (except the opposite one) of $Q$ either $\tilde Q\cap \gamma([0,1])\neq\emptyset$ implying that $\tilde Q\in \D_n$ or $\tilde Q$ is in the precompact component of $\R^2\setminus \gamma([0,1])$ and thus by simply connectedness $\tilde Q\subset \Omega$. Since $Q_1\in \F_n$, also the opposite square of $Q$ is a subset of $\Omega$. Hence $Q\in\tilde\F_n$ or $Q\subset Q'\in \F_{n-1}$ which is a contradiction. Thus we have proven $\ref{kol}$.

In order to see \ref{nel}, suppose that there exists $Q \in \partial \D_n$ such that $Q \notin \F_n$. Then $Q \in \F_{n,i} \subset \tilde\F_i$ for some $i < n$. By Property \ref{neljas}, for all the $Q' \in \tilde \F$ with $Q' \cap Q \ne \emptyset$ we have $Q' \in \tilde\F_j$ for $j \le i+1 \le n$. Thus, $Q' \subset D_n$ and $Q \notin \partial \D_n$ giving a contradiction.

If property \ref{viis} fails for some $Q \in \partial \D_n$, then for every $Q' \in \tilde \F$  with $Q' \cap Q \ne \emptyset$ we have $Q' \in \tilde\F_i$ for some $i \le n$. Thus, again $Q' \subset D_n$ and $Q \notin \partial \D_n$ giving a contradiction.

Finally, we prove property \ref{kuus}. Since $C_n$ is open it suffices to prove that every Jordan curve is loop homotopic to a constant loop. Suppose this is not the case. Then there exists a Jordan curve $\gamma$ that is not homotopic to a constant loop, and a point $x\in \Omega\setminus C_n$ that lies inside $\gamma$. In particular there exists $Q\in\Q_n$ such that $Q\not\subset D_n$ which lies inside $\gamma$ and for which $Q\cap D_n$ is an edge of a square. Now by similar argument as in \ref{kol} we conclude that $Q\in\D_n$, which is a contradiction.
\end{proof}

The next lemma shows that we can connect the boundary of $D_n$ to the boundary of $\Omega$ with a short curve in the complement of $D_n$.

\begin{lemma}\label{connecting curve}
For each point $x\in \partial D_n$, there exists an injective curve $\gamma\colon [0,1]\to \R^2$ so that $\gamma(0)=x$, $\gamma(1)\in \partial \Omega$, $\gamma(0,1)\subset \Omega\setminus \Int D_n$ and $L(\gamma)\le 2\sqrt{2}l(Q)$.
\end{lemma}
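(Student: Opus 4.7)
Let $Q\in\partial\D_n$ be a Whitney square containing $x$, so $l(Q)=2^{-n}$ by Lemma~\ref{properties}\ref{nel}. My plan is to build $\gamma$ as a two-segment polygonal path routed through a smaller adjacent Whitney square.

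By Lemma~\ref{properties}\ref{viis} we have $Q\cap \overline{\Omega\setminus \tilde F_n}\neq\emptyset$, which together with property \ref{neljas} of Lemma~\ref{lma:whitneyprop} produces a Whitney square $Q_1\in\tilde\F$ adjacent to $Q$ with $l(Q_1)=l(Q)/2$. Moreover, along any edge $e\subset\partial D_n$ of $Q$, the square across $e$ must be of this smaller size: a same-size or larger Whitney neighbor sharing the full edge $e$ with $Q$ would lie in the same connected component of $\Int\tilde F_n$ as $Q$ and hence in $\D_n$, contradicting $e\subset\partial D_n$. I would choose $Q_1$ to be a square across an edge of $Q$ containing $x$, with a possible initial arc in $Q$ of length at most $\sqrt{2}\,l(Q)/2$ handling any corner cases.

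Let $q\in Q_1$ realize $\dist(Q_1,\partial\Omega)$ and let $z\in\partial\Omega$ be a corresponding nearest boundary point. Define $\gamma$ as the concatenation of the chord $[x,q]$ (inside $Q_1$) and the segment $[q,z]$. By Lemma~\ref{lma:whitneyprop}\ref{toka} we have $|x-q|\le\diam Q_1 = \sqrt{2}\,l(Q)/2$ and $|q-z|=\dist(Q_1,\partial\Omega)\le 3\sqrt{2}\,l(Q_1)=3\sqrt{2}\,l(Q)/2$, summing to $L(\gamma)\le 2\sqrt{2}\,l(Q)$ as required.

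The main technical point is verifying $\gamma(0,1)\subset\Omega\setminus\Int D_n$. The first segment, after leaving $x$, lies in $\Int Q_1$, which is disjoint from $\Int D_n$ by property \ref{kolmas} together with $Q_1\notin\D_n$. For the second segment, I would argue the straight line $[q,z]$ cannot re-enter $\D_n$ using the distance bound $\dist(D_n,\partial\Omega)\ge l(Q)$ coming from \ref{toka} (all squares of $\D_n$ have side $\ge 2^{-n}$) combined with the simple-connectedness of $C_n$ from Lemma~\ref{properties}\ref{kuus}, which together rule out a \textit{bubble} of $D_n$ sitting between $Q_1$ and $\partial\Omega$ along $[q,z]$. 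Injectivity is automatic except in the degenerate collinear case, which is easily removed by a small perturbation.
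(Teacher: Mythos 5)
Your idea of exiting $D_n$ through a small Whitney neighbour and then heading straight for the boundary is natural, but the last step has a real gap: the tools you invoke do not show that the segment $[q,z]$ stays out of $\Int D_n$. Quantitatively, $|q-z|=\dist(Q_1,\Omega^c)$ can be as large as $3\sqrt{2}\,l(Q_1)=\tfrac{3\sqrt{2}}{2}\,l(Q)$ by \ref{toka}, while a point of $D_n$ is only known to lie at distance $>l(Q)$ from $\Omega^c$; so a point of $[q,z]$ at distance between $l(Q)$ and $\tfrac{3\sqrt{2}}{2}l(Q)$ from $z$ is not excluded from $D_n$ by the distance bound alone. Geometrically the danger is concrete: if the minimiser $q$ lies on the shared edge $Q_1\cap Q$ --- which happens precisely when the nearest boundary to $Q_1$ sits roughly on the far side of $Q$ --- then $[q,z]$ leaves $Q_1$ straight into $\Int Q\subset\Int D_n$. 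Lemma \ref{properties} \ref{kuus} cannot repair this: simple connectedness of $C_n$ rules out holes, not a \emph{connected} protrusion of $D_n$ (here $Q$ itself) sitting between $Q_1$ and $\partial\Omega$, which is exactly the obstruction. The \emph{initial arc in $Q$} you allow for corner cases compounds the problem: an arc in $Q\subset D_n$ is forbidden unless it runs along $\partial D_n$, and adding its length $\sqrt{2}\,l(Q)/2$ pushes $L(\gamma)$ past $2\sqrt{2}\,l(Q)$.

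The paper avoids these issues by never letting the curve leave a pair of explicitly controlled $\Q_n$-squares. From property \ref{viis} one gets a square $Q'\in\Q_n$ (same size as $Q$, not smaller) touching $Q$ at $x$ that is neither a Whitney square nor contained in one; the recursion in Definition \ref{def:whitney} then forces some $\Q_n$-neighbour $Q''$ of $Q'$ to meet $\partial\Omega$. The curve goes from $x$ to a point of $Q'\cap Q''$ inside $Q'$, then to a first boundary point inside $Q''$. Since neither $Q'$ nor $Q''$ is a Whitney square nor contained in one, $Q'\cup Q''$ is automatically disjoint from $\Int D_n$, and $L(\gamma)\le\diam(Q')+\diam(Q'')=2\sqrt{2}\,l(Q)$ is immediate. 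Your containment argument for $(x,q]$ via the smaller Whitney square $Q_1$ was fine; it is the unconstrained second segment that breaks.
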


\begin{proof}
Let $Q\in \partial \D_n$ be such that $x\in Q\cap \partial D_n$. By Lemma \ref{properties} \ref{viis} we have that there exists a square $Q'\in \Q_n$ touching $Q$ at $x$ so that $Q'\notin \tilde\F_n$ and $Q'\not\subset \tilde Q $ for every $\tilde Q\in \tilde F_j$, $j<n$. Thus, there exists a neighbouring square $Q''\in \Q_n$ of $Q'$ and a point $y\in \partial \Omega\cap Q''$. Let $\gamma^1$ be a curve corresponding to a line segment connecting $x$ to  a point $z\in Q'\cap Q''$ and let $\gamma^2$ be a curve corresponding to a line segment connecting $z$ to $y$. Moreover, let $t_0\coloneqq \inf\{t: \gamma^2_t\in \partial \Omega\}$. Since $\partial \Omega$ is closed, we have that $\gamma^2_{t_0}\in\partial \Omega$. Define a curve $\gamma\coloneqq \gamma^1*\gamma^2\lvert_{[0,t_0]}$.
For $\gamma$ we have that $\gamma(0,1)\subset (\Omega\setminus \Int D_n)\cap(Q'\cup Q'')$, $\gamma(0)=x$, $\gamma(1)\in\partial \Omega$ and $L(\gamma)\le d(Q')+d(Q'')=2\sqrt{2}l(Q)$.
\end{proof}

Observe that by Lemma \ref{properties} \ref{nel} we have $\partial D_n=\bigcup_{Q\in\partial \D_n}(Q\cap \partial D_n)$. Thus, by Lemma \ref{properties} \ref{kol} we have that $\partial D_n$ is locally homeomorphic to the real line. 
Since by Lemma \ref{properties} \ref{kuus} $C_n$ is simply connected, 
we have that $\partial D_n=\partial C_n$ is connected.
Hence, $\partial D_n$ is a Jordan curve. Thus, we may write \begin{equation}\label{boundary curve}\partial D_n=\bigcup_{i=1}^{L_n}I_i,\end{equation} where $I_i=[y_i,y_{i+1}]$ is an edge of a square in $\F_n$ with vertices $y_i$ and $y_{i+1}$, and $y_1=y_{L_n+1}$.

For the rest of the paper we fix a constant $M>(4\sqrt{2}+2)$. However, the following lemma is true for any $M>0$ and with $C$ depending on $M$.

\begin{lemma}\label{xy}
There exists $C\in\N$ so that for any $n \in \mathbb N$ and $x,y\in \partial D_n$ with $d_{\partial D_n}(x,y)\ge 2^{-n}C$, and for any $\gamma$ in $\Omega\setminus \Int D_n$ connecting $x$ to $y$ we have that $\gamma\cap \left(\Omega\setminus B(x,M2^{-n})\right)\neq \emptyset$. In particular, $L(\gamma)\ge M2^{-n}$.
\end{lemma}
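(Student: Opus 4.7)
My plan is to prove the contrapositive: given $\gamma$ in $\Omega \setminus \Int D_n$ connecting $x$ to $y$ with $\gamma \subset B(x, M 2^{-n})$, I will show $d_{\partial D_n}(x, y) \le C 2^{-n}$ for some $C = C(M)$. First, I would perform a routine reduction---removing loops from $\gamma$ and perturbing slightly off $\partial D_n$ away from the endpoints---to assume that $\gamma$ is simple with $\gamma \cap \partial D_n = \{x, y\}$.

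Writing $\partial D_n = I_1 \cup I_2$ for the two arcs between $x$ and $y$ along the Jordan curve $\partial D_n$, I form the two Jordan curves $\Gamma_i = \gamma \cup I_i$. Because $\Omega$ is simply connected, $\R^2 \setminus \Omega$ is connected and unbounded, so each bounded complementary component $V_i$ of $\R^2 \setminus \Gamma_i$ lies in $\Omega$. A winding-number/parity argument, exploiting that $\Gamma_1$ and $\Gamma_2$ differ by $\partial D_n$ which has nonzero winding number about any point of $C_n$, singles out exactly one $V_i$ containing $C_n$; without loss of generality $V_2 \supset C_n$, so $V_1 \subset \Omega \setminus D_n$ is bounded with $\partial V_1 = \gamma \cup I_1$.

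The main obstacle will be to localize $V_1$ inside a ball $B(x, C'' 2^{-n})$ for some $C'' = C''(M)$. Combining Lemma \ref{connecting curve} with property \ref{toka} of Lemma \ref{lma:whitneyprop}, every point of $V_1$ lying in a Whitney square of side $<2^{-n}$ is within $4\sqrt{2}\cdot 2^{-n}$ of $\partial \Omega$. Points in larger Whitney squares sit in another connected component of $\Int \tilde F_n$, and handling these is the subtle part: a direct variant of Lemma \ref{connecting curve} places the boundary of any such component close to $\partial \Omega$ as well, and the simple connectedness of $V_1$ forces such a component meeting $V_1$ to sit entirely inside $V_1$. This puts $\overline{V_1}$ in an $O(2^{-n})$-tubular neighborhood of $\partial \Omega$; a further topological argument, exploiting the shortness of the boundary arc $\gamma \subset B(x, M 2^{-n})$ and the simple connectedness of $V_1$, then pins $V_1$ inside the desired ball.

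Given the localization, $I_1 \subset \overline{V_1} \subset \overline{B(x, C'' 2^{-n})}$ is a polygonal arc made of edges of length $2^{-n}$ contained in a ball of radius $C'' 2^{-n}$, and such a ball contains only $O((C'')^2)$ dyadic edges at this scale. Hence $L(I_1) \le C 2^{-n}$ for some $C = C(M)$, giving $d_{\partial D_n}(x, y) \le L(I_1) \le C 2^{-n}$---a contradiction to the hypothesis as soon as $C$ is chosen large enough. The concluding bound $L(\gamma) \ge M 2^{-n}$ then follows at once, since $\gamma(0) = x$ lies at the center of $B(x, M 2^{-n})$, so any point of $\gamma$ outside this ball is at distance at least $M 2^{-n}$ from $x$.
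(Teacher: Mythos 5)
Your plan shares the paper's topological starting point (form the Jordan curves $\gamma\cup I_i$ and single out the bounded component $V_1\subset\Omega\setminus D_n$ with $\partial V_1=\gamma\cup I_1$), but the step you flag as the ``main obstacle''---localizing $V_1$ in a ball $B(x,C''2^{-n})$---is precisely where a genuine gap sits, and the sketch you give for it does not work. The claim that a connected component $C'$ of $\Int\tilde F_n$ which meets $V_1$ must lie entirely in $V_1$ is false as stated: $C'$ is indeed disjoint from $\partial D_n$, but $\gamma$ is only required to avoid $\Int D_n=C_n$, so $\gamma$ may pass through such a component $C'$ and $C'$ may well exit $V_1$ across $\gamma$. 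More seriously, even if one showed that $\overline{V_1}$ lies in an $O(2^{-n})$-neighborhood of $\partial\Omega$, that would not pin $V_1$ near $x$: a set confined to a thin collar of $\partial\Omega$ can snake arbitrarily far from $x$ along the boundary while $\gamma$ remains short. The ``further topological argument'' you invoke to pass from a collar to a ball is exactly the missing idea, and it is not supplied.

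The paper's own argument avoids trying to bound $V_1$ globally. Using an area count based on $d_{\partial D_n}(x,y)\ge C2^{-n}$, it produces boundary squares $Q_1',Q_2'\in\partial\D_n$ on the two arcs with $\dist(Q_i',x)\ge 2M2^{-n}$ and then picks $\Q_n$-neighbors $Q_i''\not\subset D_n$. Applying Lemma~\ref{lma:domaincut} to $\S^2\setminus D_n$ shows that $Q_1''$ and $Q_2''$ fall into different components of $(\Omega\setminus D_n)\setminus\gamma$, so one $Q''$ lies inside the relevant Jordan curve, and since $Q''$ is far from $\gamma$, every $\Q_n$-neighbor of $Q''$ is either inside that curve (hence in $\Omega$ by simple connectedness) or in $\partial\D_n$ (hence in $\Omega$). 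That forces $Q''$ to be a Whitney square of scale at most $2^{-n}$ and, since it abuts $D_n$ along an edge, to lie in $D_n$---a contradiction. This Whitney-theoretic step (``a $\Q_n$-square all of whose neighbors lie in $\Omega$ must itself belong to $\tilde F_n$, hence here to $D_n$'') is the engine that makes the localization work, and your proposal does not contain it; without it the bound $L(I_1)\lesssim 2^{-n}$, and with it the conclusion, is not established.
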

\begin{proof}

By taking a slightly larger $C$, namely $C+2$, we may assume that $x=y_i$ and $y=y_j$ for some $i$ and $j$, where $y_i,y_j$ are two endpoints of intervals from the collection $\{I_i\}$ forming the boundary as noted above.
Moreover, by symmetry we may assume that $i<j$ and $j-i\le n+1-j$.  
Since each $I_i$ is a side for two squares in $\Q_n$, by taking $C$ large enough, we obtain
\[
\mathcal{H}^2(B(x,2(M+1)2^{-n})) = \pi (2(M+1)2^{-n})^2 < \frac12 C (2^{-n})^2 \le \mathcal{H}^2(\bigcup Q),
\]
where the union is taken over all $Q\in \Q_n$ having $I_m$ as one of it sides for some $i<m\le j-1$.
Therefore, one of the intervals $I_{m_1}$, for $i<m_1\le j-1$, has to intersect with the complement of the ball $B(x,2M2^{-n})$. Let $Q_1'\in\partial \D_n$ be the boundary square corresponding to that interval and let $q_1\in I_{m_1}\setminus B(x,2M2^{-n})$. By symmetry, there also exists $Q_2'\in\partial \D_n$ whose side is some $I_{m_2}$ with ${m_2} \notin \{i+1,i+1,\dots, j-1\}$ such that there is $q_2 \in I_{m_2} \setminus B(x,2M2^{-n})$.

Suppose now that there exists a curve $\gamma$ in $\Omega\setminus \Int D_n$ joining $x$ to $y$ with $\gamma\subset B(x,M2^{-n})$. We may assume that $\gamma$ is injective, and by compactness that $\gamma(t)\in \Omega\setminus D_n$ for every $t\in(0,1)$. Then, for $i=1,2$ we have that $B(Q_i',2\sqrt{2}l(Q_i'))\subset B(q,M2^{-n})$ and hence $B(Q_i',2\sqrt{2}l(Q'))\cap \gamma=\emptyset$. Now by definition of $Q_i'$ there is a neighbouring square $Q_i''\in \Q_n$ of $Q_i'$ which is not a subset of $D_n$, see Figure \ref{fig:crosscut}.
We claim that either $Q_1''$ or $Q_2''$ lies inside the Jordan curve $\gamma'$ obtained by concatenating the curve $\gamma$ and the part of the boundary, denoted by $\gamma''$, obtained from the intervals $\{I_h\}_{h=i}^{j-1}$, or by concatenating $\gamma$ and $\partial D_n \setminus \gamma''$. 

This can be seen in the following way. Consider $\Omega\xrightarrow{h}\R^2\hookrightarrow \S^2$, where $h$ is a homeomorphism and the inclusion $\R^2\hookrightarrow \S^2$ is the inverse of the stereographic projection. Under this composite map $\S^2\setminus D_n$ is a simply connected domain. Hence, by Lemma \ref{lma:domaincut} $(\S^2\setminus D_n)\setminus \gamma$ has exactly two components whose boundaries are the two connected components of $\partial D_n \setminus \gamma$ together with $\gamma$. Thus, $(\Omega\setminus  D_n)\setminus\gamma=(\S^2\setminus D_n)\setminus \gamma$ has exactly two components. Since $\partial Q_1'' \cap \partial D_n$ and $\partial Q_2'' \cap \partial D_n$ are in two different connected components of $\partial D_n \setminus \gamma$, we conclude that $Q_1''$ and $Q_2''$ are in different components of $(\Omega\setminus D_n)\setminus \gamma$. We denote the $Q_i''$ that lies inside the Jordan curve by $Q''$.
\begin{figure}
\includegraphics[width=0.5\textwidth]{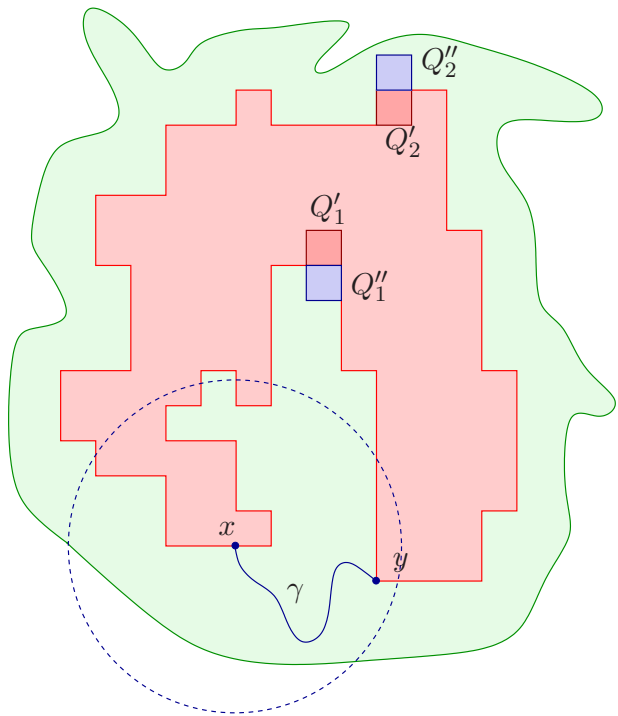}
     \caption{In the proof of Lemma \ref{xy} we assume towards a contradiction that $x$ and $y$ can be connected by a short curve $\gamma$ in $\Omega \setminus D_n$. This will imply that one more square in $\Q_n$ (here $Q_1''$) will be a subset of $D_n$.}
   \label{fig:crosscut}
 \end{figure}

Since $Q''\subset B(Q',\sqrt{2}l(Q'))$, we have that every neighbouring square of $Q''$ either lies inside $\gamma'$ or is an element of $\partial\D_n$. In particular, by the simply connectedness of $\Omega$ they all are subsets of $\Omega$. Hence, $Q''\subset D_n$ which is a contradiction. Thus, we have proven that $\gamma\cap\left(\Omega\setminus B(x,M2^{-n})\right)\neq\emptyset$.
\end{proof}

Let us now partition $\Omega\setminus D_n$ in the following way. 
Recall \eqref{boundary curve}.
Notice that for large enough $n$ we have that $L_n\ge2C$. Define $x_1\coloneqq y_1$ and then $x_m\coloneqq y_{(m-1)C}$ until $L_n+1-(m-1)C<2C$. 
Notice that for every $i \ne j$ we have $d_{\partial D_n}(x_i,x_j)\ge 2^{-n}C$.
We now partition the set $\Omega\setminus D_n$ up to Lebesgue measure zero into connected sets $\{\tilde H_j\}_{j=1}^m$ where $\tilde H_j$ is the open set bounded by $\gamma_j$, $\gamma_{j+1}$ given by Lemma \ref{connecting curve} for points $x_j$ and $x_{j+1}$, and $J_j\coloneqq\bigcup_{i=Cj}^{C(j+1)}I_i$ (with interior in $\Omega\setminus D_n$). This partition is well defined by Lemma \ref{lma:domaincut}. Notice that since $L(\gamma_i)\le M$ for all $i$, we have that $\gamma_i\cap\gamma_j=\emptyset$ for all $i \ne j$. Let us define $H_j$ as the connected component containing $\tilde H_j$ of the set $\Omega\cap\left(\tilde H_j\cup B_{\R^2}(\gamma_j\cup\gamma_{j+1}\cup J_j, \delta)\right)$, where $\delta = 2^{-n-3}$. See Figure \ref{fig:decomp} for an illustration of the decomposition. 
Although the decomposition depends on $n$, for simplicity we do not display the dependence in the notation.
A crucial property of our decomposition is the following lemma.

\begin{figure}
\includegraphics[width=0.7\textwidth]{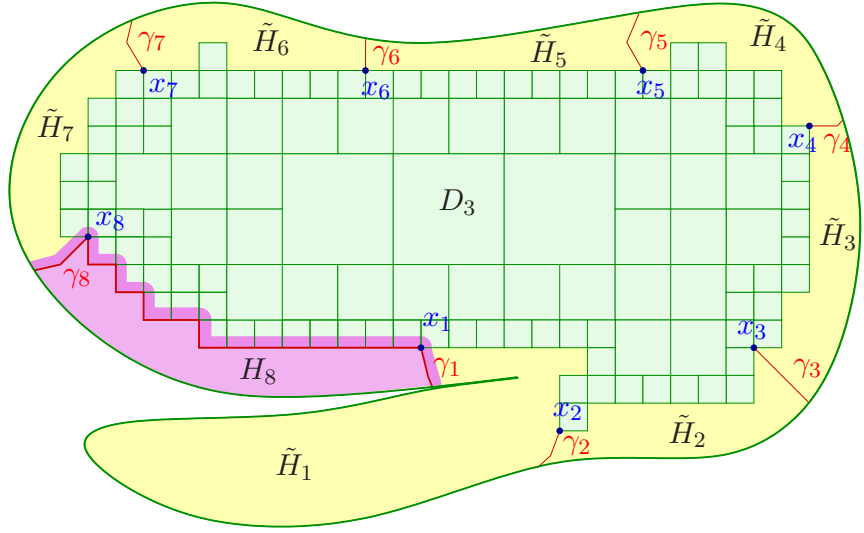}
     \caption{Here the domain $\Omega$ is decomposed into the core part $D_3$ and eight boundary parts $\tilde H_i$. A neighbourhood $H_8$ of $\tilde H_8$ is also illustrated.}
   \label{fig:decomp}
 \end{figure}

\begin{lemma}
We have $H_j\cap H_i \neq \emptyset$, if and only if $\lvert i-j\rvert\le 1$ in a cyclic manner.
\end{lemma}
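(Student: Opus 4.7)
The ``if'' direction is immediate. For $i=j$ the set $H_j$ is nonempty, and when $j=i+1$ cyclically the common curve $\gamma_{i+1}$ lies in both tripods $R_i:=\gamma_i\cup J_i\cup\gamma_{i+1}$ and $R_j$, so any sufficiently small ball about an interior point of $\gamma_{i+1}$ is contained in $B(R_i,\delta)\cap B(R_j,\delta)\cap\Omega$ and meets both $\tilde H_i$ and $\tilde H_{i+1}$; such a ball lies in both $H_i$ and $H_{i+1}$.

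For the ``only if'' direction I would argue by contradiction. Assume $|i-j|\ge 2$ cyclically and pick $p\in H_i\cap H_j$. The key reduction is to produce $p_i\in R_i$ and $p_j\in R_j$ with $|p_i-p_j|\le 2\delta$: if $p\in B(R_i,\delta)\cap B(R_j,\delta)$ this is immediate, while if $p\in\tilde H_i$ (say), then because $p\in H_j$ must be connected to $\tilde H_j$ inside $\Omega\cap(\tilde H_j\cup B(R_j,\delta))$ and $\partial\tilde H_i\cap\Omega\subset R_i$, the connecting path must cross $R_i$ inside $B(R_j,\delta)$, yielding such a pair. Three cases arise according to the location of $p_i,p_j$.

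Case 1 ($p_i\in J_i$, $p_j\in J_j$): the arcs $J_i,J_j$ consist of $C$ consecutive dyadic edges whose vertex-index ranges along $\partial D_n$ are disjoint for cyclic $|i-j|\ge 2$, so no edge of $J_i$ shares a vertex with any edge of $J_j$, and the standard dyadic-grid argument gives $d(J_i,J_j)\ge 2^{-n}>2\delta$, a contradiction. Case 2 ($p_i\in J_i$, $p_j\in\gamma_\ell$ with $\ell\in\{j,j+1\}$): the curve $\gamma_\ell$ is contained in a union $Q'_\ell\cup Q''_\ell$ of two closed $2^{-n}$-squares disjoint from $D_n$ by Lemma \ref{connecting curve}. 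Any dyadic edge of $\partial D_n$ within distance $2\delta<2^{-n}$ of this union must share a vertex with $Q'_\ell$ or $Q''_\ell$; these finitely many edges form a short local detour of $\partial D_n$ around $Q'_\ell\cup Q''_\ell$ near $x_\ell$ and, for $C$ chosen large enough, lie entirely in $J_{\ell-1}\cup J_\ell$. Since cyclic $|i-j|\ge 2$ gives $i\notin\{\ell-1,\ell\}$, this contradicts $p_i\in J_i$.

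Case 3 ($p_i\in\gamma_a$, $p_j\in\gamma_b$ with $a\in\{i,i+1\}$ and $b\in\{j,j+1\}$): cyclic $|i-j|\ge 2$ forces $a\ne b$, and by the construction of the $x_k$ we have $d_{\partial D_n}(x_a,x_b)\ge C\cdot 2^{-n}$. Concatenating $\gamma_a$ from $x_a$ to $p_i$, the straight segment from $p_i$ to $p_j$ of length $\le 2\delta$, and the reversal of $\gamma_b$ from $p_j$ to $x_b$ yields a curve from $x_a$ to $x_b$ in $\Omega\setminus\Int D_n$ of total length at most $4\sqrt{2}\cdot 2^{-n}+2\delta<M\cdot 2^{-n}$ since $M>4\sqrt{2}+2$, contradicting Lemma \ref{xy}. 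The main obstacle is verifying that the short straight segment from $p_i$ to $p_j$ actually lies in $\Omega\setminus\Int D_n$: this should follow because each $\gamma$-curve sits in a fixed union of two closed $2^{-n}$-squares disjoint from $D_n$ whose sidelength is an order of magnitude larger than $2\delta=2^{-n-2}$, so the segment stays outside $\overline{D_n}$ up to a harmless perturbation near the starting vertices on $\partial D_n$.
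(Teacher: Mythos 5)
Your overall strategy matches the paper's: reduce the problem to finding a pair of Euclidean-close points on the ``tripods'' $R_i$ and $R_j$, then derive a contradiction via Lemma \ref{connecting curve} and Lemma \ref{xy}. Your opening reduction to $p_i\in R_i$, $p_j\in R_j$ with $|p_i-p_j|\le 2\delta$ is a clean packaging of what the paper does case by case. However, two of your cases contain genuine gaps.

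The main problem is Case 2. You assert that the edges of $\partial D_n$ within $2\delta$ of $Q'_\ell\cup Q''_\ell$ form a ``short local detour'' that, for $C$ large enough, lies entirely in $J_{\ell-1}\cup J_\ell$. This is false in general: even though $\partial D_n$ is a Jordan curve made of dyadic edges, an arc of $\partial D_n$ that is Euclidean-close to $Q'_\ell$ can be arbitrarily far from $x_\ell$ in the \emph{inner} metric $d_{\partial D_n}$ --- think of $D_n$ with a long one-square-wide corridor whose two walls both touch the squares $Q'_\ell,Q''_\ell$. No fixed $C$ can absorb this. The whole purpose of Lemma \ref{xy} is precisely to rule out short connections \emph{through} $\Omega\setminus\Int D_n$ between boundary-far arcs; you cannot replace it here by a local combinatorial count. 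The fix is the same as in your Case 3: concatenate the segment $[p_i,p_j]$ with the subarc of $\gamma_\ell$ from $p_j$ to $x_\ell$, obtain a curve in $\Omega\setminus\Int D_n$ from $p_i\in\partial D_n$ to $x_\ell\in\partial D_n$ of length $\le 2\sqrt 2\,2^{-n}+2\delta<M2^{-n}$, and conclude from Lemma \ref{xy} that $d_{\partial D_n}(p_i,x_\ell)<C2^{-n}$, hence $i\in\{\ell-1,\ell\}\subset\{j-1,j,j+1\}$, a contradiction.

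The second issue, which you yourself flag in Case 3 (and which also affects the repaired Case 2), is the claim that the straight segment $[p_i,p_j]$ lies in $\Omega\setminus\Int D_n$. You dismiss this with ``up to a harmless perturbation near the starting vertices on $\partial D_n$,'' but it is not harmless: a point $p_i$ on $\gamma_a$ may lie within $2\delta$ of $\partial D_n$, and the segment could cross into $\Int D_n$ if the boundary bulges between the endpoints. Also, the squares $Q'_a,Q''_a$ containing $\gamma_a$ are not disjoint from $D_n$ --- only their interiors are disjoint from $\Int D_n$, and their closed faces can lie on $\partial D_n$. The paper circumvents this by a different and more careful reduction of the location of the common point $x$ (first pushing $x$ out of $\tilde H_i\cup\tilde H_j$, then splitting on $x\in D_n$ versus $x\notin D_n$) and by constructing the connecting curve piece by piece inside explicit $\delta$-collars $B(\gamma_m,\delta)$ where membership in $\Omega\setminus D_n$ can be checked directly. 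Your approach would need a comparable argument before the concatenated curve may be fed into Lemma \ref{xy}.

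Case 1 is essentially correct: two closed dyadic edges of sidelength $2^{-n}$ that do not share a grid vertex are at Euclidean distance $\ge 2^{-n}$, and for $|i-j|\ge 2$ cyclically the arcs $J_i,J_j$ share no vertex because $\partial D_n$ is a Jordan curve. This is a legitimate shortcut the paper does not use. The ``if'' direction is also fine.
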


\begin{proof}
Trivially $\gamma_{i+1} \in H_i \cap H_{i+1}$. Thus, we only need to show that 
$H_j\cap H_i \neq \emptyset$ implies $\lvert i-j\rvert\le 1$. We may assume that $i \ne j$.
Let $x\in H_i\cap H_j$.

Suppose first that $x\in \tilde H_i$. Then, by (path) connectedness of $H_j$ there exists a path $\gamma$ in $H_j$ from $x$ to $\tilde H_j$. Let \begin{align}t_0\coloneqq \inf\{t\in[0,1]:\gamma(t)\notin \tilde H_i\} .\end{align}
Then, $\gamma(t_0)\notin \tilde H_i$ but $\gamma(t)\in H_i\cap H_j$. Thus it suffices to consider the case when $x\notin \tilde H_i\cup \tilde H_j$. 

Suppose now that $x\in D_n$. Since $\delta<\frac{2^{-n}}{2}$, we have that $x\in Q$ for some $Q\in\partial \D_n$. Then, there are neighbouring squares $Q_i, Q_j \in \Q_n$ of $Q$ for which $Q_i\cap \tilde H_i\neq\emptyset$ and $Q_j\cap\tilde H_j\neq\emptyset$. Since $\delta$ is small, we may choose the $Q_i,Q_j$ so that $Q_i\cap Q_j\neq \emptyset$. If $Q_i=Q_j$ or if $Q_i$ and $Q_j$ have a common edge, then there is a curve $\gamma'$ in $Q_i\cup Q_j$ from $\tilde H_i$ to $\tilde H_j$ with $L(\gamma')<2\delta$. If $Q_i\cap Q_j$ is a singleton, then by Lemma \ref{properties} \ref{kol} the neighbouring square $Q'\neq Q$ of both $Q_i$ and $Q_j$ lies in $\Omega\setminus \Int D_n$. Indeed, if this were not the case, then $Q',Q \in \F_n$ and $Q' \cap Q$ is a singleton, implying that $Q_i \in \D_n$ or $Q_j \in \D_n$. 
Thus, there exists a curve $\gamma'$ in $\Omega\setminus D_n$ joining $Q_i$ and $Q_j$ with $L(\gamma')< 4\delta$. 

Now, we have $Q_i \cap J_i \ne \emptyset$ or $Q_i \cap (\gamma_i \cup \gamma_{i+1}) \ne \emptyset$. Notice that $\gamma_i\cap J_i \ne \emptyset \ne \gamma_{i+1}\cap J_i$. By Lemma \ref{connecting curve} we have $\max(l(\gamma_i), l(\gamma_{i+1})) \le 2\sqrt{2}\cdot 2^{-n}$. Combining these observations with the analogous ones for $Q_j$, we have that
 $J_i$ and $J_j$ can be connected by a curve in $\Omega\setminus D_k$ with length less than $4\delta+4\sqrt{2}\cdot2^{-n}<2^{-n}M$.
Hence, we have by Lemma \ref{xy} that $\dist_{\partial D_n}(J_i,J_j)\le C$. Thus, $\lvert{i-j}\rvert\le 1$ in cyclical manner.

We are left with the case where $x\in\Omega\setminus (D_n\cup\tilde H_i\cup \tilde H_j)$. By definition we have that $B(D_n,2\delta)\subset \Omega$. Thus, if $\dist(x,J_i)<\delta$, we may join $x$ to $J_i$ by a curve in $\Omega\setminus \Int D_n$ with length less than $\delta$. If $\dist(x,J_i)\ge\delta$, then $x\in B(\gamma_m,\delta)$, where $m\in\{i,i+1\}$. By path connectedness of $H_i$ there is a curve $\gamma$ in $H_i$ connecting $x$ to $\gamma_i\cup\gamma_{i+1}\cup J_i$. We want to prove that $x$ can be joined to $\gamma_m$ in $\delta$-neighbourhood of $\gamma_m$.  If (a subcurve of) $\gamma$ is not such a curve, then we may define \begin{align}t_0\coloneqq \inf \{t\in[0,1]:\gamma(t)\in B(D_n,\delta)  \}.\end{align}
Then, $\gamma\lvert_{[0,t_0]}\subset B(\gamma_m,\delta)$. Therefore, there exists a point $y\in \gamma_m$ with $d(\gamma(t_0),y)<\delta$. In particular, the line segment $[\gamma(t_0),y]$ lies in $(\Omega\setminus D_n)\cap B(\gamma_m,\delta)$ and thus we have proven that there exists a curve $\gamma'$ in $(\Omega \setminus D_n)\cap B(\gamma_m,\delta)$ connecting $x$ to $\gamma_m$. By the definition of $\gamma_m$ we have that $\gamma'\subset B(\gamma_m(0), 2\sqrt{2}\cdot 2^{-n}+\delta)$. By the same argument for $j$ we conclude that $J_i$ and $J_j$ can actually be connected by a curve $\gamma$ in $\left(\Omega\setminus \Int D_n\right)\cap B(\gamma(0),4\sqrt{2}\cdot 2^{-n}+2\delta)$. Hence, by Lemma \ref{xy} $\dist_{\partial D_n}(J_i,J_j)< C$, and thus $\lvert i-j \rvert\le 1$ in cyclical manner.
\end{proof}

\section{Approximation}\label{sec:approx}

In this section we finish the proof of Theorem \ref{main_thm} by making a partition of unity using the decomposition of $\Omega$ constructed in Section \ref{sec:decomp} and by approximating a given function by polynomials in this decomposition. Recall that our aim is to show that for any $u \in L^{k,p}(\Omega)$ and $\epsilon > 0$ there exists a function $u_\epsilon \in W^{k,\infty}(\Omega)\cap C^{\infty}(\Omega)$ with
$\|\nabla^k u-\nabla^k u_{\epsilon}\|_{L^p(\Omega)}\lesssim \epsilon.$
By noting that $L^{k,p}(\Omega)\cap C^{\infty}(\Omega)$ is dense in $L^{k,p}(\Omega)$ we may assume that function $u \in L^{k,p}(\Omega)\cap C^{\infty}(\Omega)$. From now on, let $u$ and $\epsilon > 0$ be fixed.

Using the notation from Section \ref{sec:decomp},
we write the domain $\Omega$ as the union of the core part $D_{n}$ and the boundary regions $\{H_i\}_{i=1}^l$.
For each $\tilde{H}_i$ we let $\mathcal{I}_i$ be the collection of squares $Q$ in $\partial \mathcal{D}_{n}$ such that $Q\cap \tilde{H}_i\neq \emptyset$, which are bounded in number independently of $n$. We need to decide what polynomial to attach to each set $H_i$.
For this purpose, for each $1\leq i\leq l$ we assign a square $Q_i \in\mathcal{I}_i$. We call $Q_i$ the associated square of ${H}_i$.

Given $Q\in \mathcal{I}_i$ we set $\mathcal{P}_Q:=\bigcup_{j=i-1}^{i+1} \{Q'\in\mathcal{I}_j\}$, which is a collection of squares from a suitable neighbourhood of $Q$.

Recall the approximating polynomials $P_Q$ introduced in Section \ref{sec:approxpoly}.
We abbreviate $P_i = P_{Q_i}$ for the associated squares $Q_i$.

We make a smooth partition of unity by using a Euclidean mollification. (Compare to \cite{KZ} where the inner distance in $\Omega$ was used for the mollification.) Let $\rho_r$ denote a standard Euclidean mollifier supported in $B(0,r)$.
We start with a collection of functions $\{\tilde\psi_i\}_{i=0}^l$, where 
$\tilde\psi_0 = \chi_{D_n} \ast \rho_{2^{-n-5}}$ and 
$\tilde\psi_i = \left(\chi_{\tilde H_i} \ast \rho_{2^{-n-5}}\right)|_{H_i}$ for $i \ge 1$.
Using this we obtain a partition of unity $\{\psi_i\}_{i=0}^l$ by setting
$\psi_i = \tilde\psi_i/\sum_{j=0}^l \tilde\psi_j$.

Now the partition of unity $\{\psi_i\}_{i=0}^l$ satisfies the following. 
\begin{enumerate}
\item The function $\psi_0$ is supported in $B(D_{n},\frac{2^{-n}}{10})$.
\item For $i \ge 1$ the function $\psi_i$ is supported in $H_i$.
\item For all $i$, $0\leq \psi_i\leq 1$.
\item $\sum \psi_i \equiv 1$ on $\Omega$.
\item For all $i$, $|\nabla^{\alpha}\psi_i|\leq C_{\alpha}2^{-n|\alpha|}$ for all multi-indeces $\alpha$.
\end{enumerate}
 

 We will fix $n$ later such that for the function $u_\epsilon$ defined as
 $$u_{\epsilon}(x):=u(x)\psi_0(x)+\sum_{i=1}^{l}\psi_i(x)P_i(x)$$ for $x\in\Omega$, we have $$\|\nabla^k u-\nabla^k u_{\epsilon}\|_{L^p(\Omega)}<C\epsilon.$$ 
Note that $u_\epsilon = u$ on $D_{n-1}$; indeed $D_{n-1}\cap \psi_i=\emptyset$ for $i\geq 1$, see Lemma \ref{properties} \ref{nel}.

First of all, we consider only $n$ large enough so that 

 \begin{equation}\label{est_(-1)}
 \|\nabla^k u\|_{L^p(\Omega\backslash D_{n-1})}\leq \epsilon.
 \end{equation}
 
Now, we need to show that $n$ can actually be chosen large enough so that also
 $$\|\nabla^k u_{\epsilon}\|_{L^p(\Omega\backslash D_{n-1})}\leq C\epsilon.$$
 So, we compute for $Q\in \mathcal{I}_i$ and $|\alpha|=k$
 \begin{equation}\label{est_0}
 \begin{split}
  \|\nabla^{\alpha} u_{\epsilon}\|_{L^p(Q)} &\leq \sum_{\beta\leq\alpha}\left(\int_Q|\nabla^{\beta} u-\nabla^{\beta}P_i(x)|^p|\nabla^{\alpha-\beta}\psi_0(x)|^p\,\d x\right)^{1/p}
  \\&+ \sum_{\beta\leq\alpha}\sum_{j}\left(\int_Q|\nabla^{\beta} P_j(x)-\nabla^{\beta}P_i(x)|^p|\nabla^{\alpha-\beta}\psi_j(x)|^p\,\d x\right)^{1/p}
  \\&=: A_1+A_2,
 \end{split}
 \end{equation}

 where $A_1$ and $A_2$ are the first and second terms on the right hand side of the inequality and we used that for $\beta<\alpha$, $\sum_j \nabla^{\alpha-\beta} \psi_j =0$ and order of $P_i$ is at most $k-1$. We first estimate $A_1$ as 
 \begin{equation}\label{est1}
  \begin{split}
   A_1 &\lesssim \sum_{\beta\leq\alpha}2^{n(|\alpha|-|\beta|)}\|\nabla^{\beta}u-\nabla^{\beta}P_i\|_{L^p(Q)}
   \\&\lesssim \sum_{\beta\leq\alpha}2^{n(|\alpha|-|\beta|)}(\|\nabla^{\beta}P_i-\nabla^{\beta}P_Q\|_{L^p(Q)}+\|\nabla^{\beta}u-\nabla^{\beta}P_Q\|_{L^p(Q)})
   \\&\lesssim \sum_{\beta\leq\alpha}2^{n(|\alpha|-|\beta|)}2^{n(|\beta|-k)}\|\nabla^{k} u\|_{L^p(\cup\tilde{Q})}
   \\&\lesssim\|\nabla^{k} u\|_{L^p(\cup\tilde{Q})},
  \end{split}
 \end{equation}
 where in the third inequality we used that $Q_i$ (associated square of $\tilde{H}_i$) and $Q$ may be joined by a chain of bounded number of squares from $\mathcal{I}_i$ by our construction, and therefore we may apply Lemma \ref{chaining}. Similarly we estimate $A_2$ as 
 \begin{equation}\label{est_2}
  \begin{split}
   A_2 &\lesssim\sum_{\beta\leq\alpha}\sum_{j=i-1}^{i+1}\left(\int_Q|\nabla^{\beta} P_j(x)-\nabla^{\beta}P_i(x)|^p|\nabla^{\alpha-\beta}\psi_j(x)|^p\,\d x\right)^{1/p}
   \\&\lesssim\sum_{\beta\leq\alpha}2^{n(|\alpha|-|\beta|)}\sum_{j=i-1}^{i+1}(\|\nabla^{\beta}P_j-\nabla^{\beta}P_Q\|_{L^p(Q)}+\|\nabla^{\beta}P_i-\nabla^{\beta}P_Q\|_{L^p(Q)})
   \\&\lesssim \sum_{\beta\leq\alpha}2^{n(|\alpha|-|\beta|)}2^{n(|\beta|-k)}\|\nabla^{k} u\|_{L^p(\cup\tilde{Q})}
   \\&\lesssim\|\nabla^{k} u\|_{L^p(\cup\tilde{Q})},
  \end{split}
 \end{equation}
 where again in the second inequality we used that if $\psi_j(x)\neq 0$ for $x\in Q\in\mathcal{I}_i$ then by our construction $Q_j$ and $Q$ can be joined by a chain of bounded number of squares as $j$ is either $i-1,i$ or $i+1$ (cyclically); and therefore we can apply Lemma \ref{chaining}.
 
For $Q\in \tilde{\mathcal{F}}\setminus \mathcal{D}_{n}$ 
such that $Q\cap \spt(\psi_0)\neq\emptyset$, we assign to $Q$ a square $Q'\in\mathcal{I}_i$, such that $Q\cap Q'\neq\emptyset$. Note that such a square $Q'$ exists by our construction. Then $Q$ and $Q'$ can be joined by a chain of bounded (by an absolute constant) number of squares from $\mathcal{D}_{n+1}$. We choose such a chain for $Q$ and denote it by $\mathcal{B}_Q$. We also set $$\mathcal{J}_{n}:=\{Q\in \tilde{\mathcal{F}}\setminus \mathcal{D}_{n}:Q\cap \spt(\psi_0)\neq\emptyset\}.$$ We estimate using Lemma \ref{chaining} exactly as above (see \eqref{est_0}) to obtain for $|\alpha|=k$

\begin{equation}\label{est_3}
\begin{split}
  \|\nabla^{\alpha} u_{\epsilon}\|_{L^p(Q)} &\leq \sum_{\beta\leq\alpha}\left(\int_Q|\nabla^{\beta} u-\nabla^{\beta}P_Q(x)|^p|\nabla^{\alpha-\beta}\psi_0(x)|^p\,\d x\right)^{1/p}
  \\&+ \sum_{\beta\leq\alpha}\sum_{j}\left(\int_Q|\nabla^{\beta} P_j(x)-\nabla^{\beta}P_Q(x)|^p|\nabla^{\alpha-\beta}\psi_j(x)|^p\,\d x\right)^{1/p}
  \\&=: B_1+B_2.
 \end{split}
\end{equation}
Again, we estimate separately,
\begin{equation}\label{est_4}
  \begin{split}
   B_1 &\lesssim \sum_{\beta\leq\alpha}2^{n(|\alpha|-|\beta|)}\|\nabla^{\beta}u-\nabla^{\beta}P_Q\|_{L^p(Q)}
   \\&\lesssim\|\nabla^{k} u\|_{L^p(Q)}
  \end{split}
 \end{equation}
and
\begin{equation}\label{est_5}
  \begin{split}
   B_2 &\lesssim\sum_{\beta\leq\alpha}\sum_{j=i-1}^{i+1}\left(\int_Q|\nabla^{\beta} P_j(x)-\nabla^{\beta}P_Q(x)|^p|\nabla^{\alpha-\beta}\psi_j(x)|^p\,\d x\right)^{1/p}
   \\&\lesssim\sum_{\beta\leq\alpha}2^{n(|\alpha|-|\beta|)}\sum_{j=i-1}^{i+1}(\|\nabla^{\beta}P_j-\nabla^{\beta}P_{Q'}\|_{L^p(Q)}+\|\nabla^{\beta}P_{Q'}-\nabla^{\beta}P_Q\|_{L^p(Q)})
   \\&\lesssim \sum_{\beta\leq\alpha}2^{n(|\alpha|-|\beta|)}2^{n(|\beta|-k)}(\|\nabla^{k} u\|_{L^p(\underset{Q''\in\mathcal{P}_{Q'}}\bigcup Q'')}+\|\nabla^{k} u\|_{L^p(\underset{Q''\in \mathcal{B}_{Q'}}\bigcup Q'')}
   \\&\lesssim \|\nabla^{k} u\|_{L^p(\underset{Q''\in\mathcal{P}_{Q'}\bigcup \mathcal{B}_{Q'}}\bigcup Q'')}.
  \end{split}
 \end{equation}
 Next we note that $\nabla^k u_{\epsilon}\equiv 0$ in $\tilde{H}_i\backslash \cup_{j\neq i} \spt(\psi_j)$ and we compute for $|\alpha|=k$
 
  \begin{equation}\label{est_6}
 \begin{split}
  \|\nabla^{\alpha} u_{\epsilon}\|_{L^p(H_i)} &\leq \sum_{Q\in \tilde{Q_i}}\|\nabla^{\alpha} u_{\epsilon}\|_{L^p(Q)} + \sum_{Q\in \mathcal{J}_{n},Q\cap H_i\neq\emptyset} \|\nabla^{\alpha} u_{\epsilon}\|_{L^p(Q)}
  \\&+ \sum_{j=i-1}^{i+1}\|\nabla^{\alpha} u_{\epsilon}\|_{L^p((\spt(\psi_j)\cap \spt(\psi_i))\setminus \underset{Q''\in\partial\mathcal{D}_{n}\bigcap \mathcal{J}_{n}}\bigcup Q'')}
 \end{split}
 \end{equation}
 The terms in the first and second summands have been estimated earlier. Denoting $H'_i:=(\cup_{j=i-1}^{i+1}\spt(\psi_j)\cap \spt(\psi_i))\setminus \underset{Q''\in\partial\mathcal{D}_{n}\bigcap \mathcal{J}_{n}}\bigcup Q''$, we estimate now the third one;
 
 \begin{equation}\label{est_7}
  \begin{split}
   \|\nabla^{\alpha} u_{\epsilon}\|_{L^p(H'_i)} 
   &\lesssim \sum_{\beta\leq\alpha}
    2^{n(|\alpha|-|\beta|)}\sum_{j=i-1}^{i+1}\|\nabla^{\beta}P_j-\nabla^{\beta}P_i\|_{L^p(H'_i)}
   \\&\lesssim \sum_{\beta\leq\alpha}2^{n(|\alpha|-|\beta|)}(\|\nabla^{\beta}P_j-\nabla^{\beta}P_i\|_{L^p(Q_i)}
   \\&\lesssim \sum_{\beta\leq\alpha}2^{n(|\alpha|-|\beta|)}2^{n(|\beta|-k)}\|\nabla^{k} u\|_{L^p(\underset{Q'\in\mathcal{P}_{Q_i}}\bigcup Q')}
   \\&\lesssim\|\nabla^{k} u\|_{L^p(\underset{Q'\in\mathcal{P}_{Q_i}}\bigcup Q')},
  \end{split}
 \end{equation}
where we used the facts that for $\beta<\alpha$, $\nabla^{\alpha-\beta}\sum_j(\psi_j)=0$ and $\psi_0\equiv 0$ in $H'_i$ in the first inequality, Lemma \ref{norm_equivalence} in the second inequality since $H'_i\subset CQ_i$ for some absolute constant $C$ coming from Lemma \ref{connecting curve} and in the third inequality we used Lemma \ref{chaining}.

 \begin{rem}\label{overlap}
Note that for each $Q\in\mathcal{I}_i$ we have $\mathcal{P}_{Q}=\mathcal{P}_{Q_i}$ where $Q_i$ is the associated square of $\tilde{H}_i$.
We note that any $Q'\in\partial\mathcal{D}_n$ occurs in at most three distinct collections $\mathcal{P}_{Q_i}$. Moreover any $Q\in\mathcal{D}_{n+1}$ appears in only a bounded number of the collections $\mathcal{B}_{Q''}$, where $Q''\in\mathcal{J}_{n}$. In particular, any $Q'\in\partial\mathcal{D}_n$ appears in only a bounded number of the collections $\mathcal{B}_{Q''}$, where $Q''\in\mathcal{J}_{n}$. The bounds are provided by absolute constants coming from volume comparison. 
\end{rem}

Now it follows from equations \eqref{est1}, \eqref{est_2}, \eqref{est_3}, \eqref{est_6} and \eqref{est_7} that

\begin{equation}\label{est_8}
\begin{split}
\|\nabla^{\alpha} u_{\epsilon}\|_{L^p(\Omega\backslash C_{n})}&\lesssim \sum_i\|\nabla^ k u\|_{L^p(H_i)}+\|\nabla^k u\|_{L^p(\underset{Q\in\partial \mathcal{D}_{n}}\bigcup Q)}
\\& \lesssim \|\nabla^k u\|_{L^p(\underset{Q\in\partial \mathcal{D}_{n}}\bigcup Q)} + \|\nabla^k u\|_{L^p(\underset{Q\in\mathcal{J}_{n}}\bigcup Q)} + \|\nabla^k u\|_{L^p(\underset{\substack{Q\in\mathcal{J}_{n}\\ Q'\in \mathcal{B}_Q}}\bigcup Q')}
\end{split}
\end{equation} 
when $|\alpha|=k$. By Remark \ref{overlap} we may choose $n$ such that
\[\|\nabla^k u\|_{L^p(\underset{Q\in\partial \mathcal{D}_{n}}\bigcup Q)} + \|\nabla^k u\|_{L^p(\underset{Q\in\mathcal{J}_{n}}\bigcup Q)} + \|\nabla^k u\|_{L^p(\underset{\substack{Q\in\mathcal{J}_{n}\\ Q'\in \mathcal{B}_Q}}\bigcup Q')}<\epsilon.
\]
Then, the claim follows from \eqref{est_(-1)} and \eqref{est_8}.

\begin{rem}\label{k=1}
We note that when $k=1$ we may take the function to be smooth as well as bounded for showing the density of $W^{1,\infty}(\Omega)$ in $W^{1,p}(\Omega)$. This is because truncations approximate the functions in $W^{1,p}(\Omega)$. This allows us to boundealso approximate the $L^p$ norm of $u$. Indeed let $u\in W^{1,p}(\Omega)\cap C^{\infty}(\Omega)\cap L^{\infty}(\Omega)$ such that $\|u\|_{L^{\infty}}\leq M$. Decompose the domain as in the above construction; then choose $n$ large enough such that $\|u\|_{W^{1,p}(\Omega\setminus D_{n-1})}\leq \epsilon$ and $M|\Omega\setminus D_{n-1}|<\epsilon$. Then it follows from estimates in the proof that the function $u_{\epsilon}$ defined as above approximates $u$ in $W^{1,p}(\Omega)$ with error given by $\epsilon$. This conclusion is the content of \cite{KZ}.
\end{rem}

Finally, let us show how the smooth approximation in Jordan domains is done.

\begin{proof}[Proof of Corollary 1.3]
The argument we need follows the one used to prove \cite[Corollary 1.2]{KZ}. As in \cite{KZ}, given a bounded Jordan domain we approximate it from outside by a nested sequence of Lipschitz and simply connected domains $G_s$ which are obtained for example by taking the complement of the unbounded connected component of the union Whitney squares larger than $2^{-s}$ from the Whitney decomposition of the complementary Jordan domain of $\Omega$. 

Then, we note that for given $n$, taking $s_n$ large enough, we have that the squares in $\partial \mathcal{D}_n$ are Whitney type sets in $G_{s_n}$, meaning they have diameters comparable to the distance from the boundary of $G_{s_n}$. 

Note that $G_{s_n}\subset B(\Omega,2^{-s_n+5})$ are simply connected. Now the set $G_{s_n}\setminus \bar{C_n}$ (recall that $C_n$ is a suitable connected component of the interior of the union of the Whitney squares of scale less than $2^{-n}$) can be decomposed in the same way as $\Omega\setminus \bar{C_n}$ was decomposed into the sets $\tilde{H}_i$ in Section \ref{sec:decomp}. 

We may then follow the argument used in the proof of Theorem \ref{main_thm} to obtain an approximating sequence of functions $u_n$ in $G_{s_n}$ which are in the space $W^{k,\infty}(G_{s_n})\cap L^{k,p}(G_{s_n})\cap C^{\infty}(G_{s_n})$. 
By multiplying with a smooth cut-off function that is $1$ on $\Omega$ and compactly supported in $G_{s_n}$, we obtain a sequence of global smooth functions having the desired properties.
\end{proof}

\bibliographystyle{amsplain}
\bibliography{SoboReferences}
\end{document}